\let\oldsection\section
\renewcommand\section{\setcounter{equation}{0}\oldsection}
\newtheorem{corollary}{Corollary}[section]
\newtheorem{theorem}{Theorem}[section]
\newtheorem{lemma}{Lemma}[section]
\newtheorem{proposition}{Proposition}[section]
\newtheorem{definition}{Definition}[section]
\newtheorem{remark}{Remark}[section]
\date{January 6, 2014}
\begin{document}

\title[Strong Solutions to the 3D Primitive Equations]{Global Well-posedness of Strong Solutions\\ to the 3D Primitive Equations with Horizontal Eddy Diffusivity}

\author{Chongsheng~Cao}
\address[Chongsheng~Cao]{Department of Mathematics, Florida International University, University Park, Miami, FL 33199, USA}
\email{caoc@fiu.edu}

\author{Jinkai~Li}
\address[Jinkai~Li]{Department of Computer Science and Applied Mathematics, Weizmann Institute of Science, Rehovot 76100, Israel}
\email{jklimath@gmail.com}

\author{Edriss~S.~Titi}
\address[Edriss~S.~Titi]{ Department
of Computer Science and Applied Mathematics, Weizmann Institute of Science,
Rehovot 76100, Israel. Also Department of Mathematics and Department of Mechanical and Aerospace Engineering, University of California, Irvine, California 92697-3875, USA}
\email{etiti@math.uci.edu and edriss.titi@weizmann.ac.il}

\subjclass{AMS 35Q35, 76D03, 86A10.}
\keywords{well-posedness; strong solution; primitive equation.}

\allowdisplaybreaks
\begin{abstract}
In this paper, we consider the initial-boundary value problem of the 3D primitive equations for oceanic and atmospheric dynamics with only horizontal diffusion in the temperature equation. Global well-posedness of strong solutions are established with $H^2$ initial data.
\end{abstract}

\maketitle

\allowdisplaybreaks
\section{Introduction}\label{sec1}

The primitive equations derived from the Boussinisq system of incompressible flow are fundamental models for weather prediction, see, e.g., Lewandowski \cite{LEWAN}, Majda \cite{MAJDA}, Pedlosky \cite{PED}, Vallis \cite{VALLIS}, and Washington and Parkinson \cite{WP}. Due to their importance, the primitive equations has been studied analytically by many authors, see, e.g., \cite{LTW92A,LTW92B,TZ04,PTZ09,MAJDA} and the references therein.

In this paper, we consider the viscous primitive equations with only horizontal
diffusion in the temperature equation:
\begin{eqnarray}
&\partial_tv+(v\cdot\nabla_H)v+w\partial_zv+\nabla_Hp-\Delta v+f_0k\times
v=0,\label{1.1-1}\\
&\partial_zp+T=0,\label{1.2-1}\\
&\nabla_H\cdot v+\partial_zw=0,\label{1.3-1}\\
&\partial_tT+(v\cdot\nabla_H)T+w\partial_zT-\Delta_HT=0,\label{1.4-1}
\end{eqnarray}
where the horizontal velocity $v=(v^1,v^2)$, the vertical velocity $w$, the temperature
$T$ and the pressure $p$ are the unknowns and $f_0$ is the Coriolis parameter. In this paper, we use the notations $\nabla_H=(\partial_x,\partial_y)$
and $\Delta_H=\partial_x^2+\partial_y^2$ to denote the horizontal gradient and the
horizontal Laplacian, respectively. The dominat horizontal eddy diffusivity in this model is justified by some geophysicists due to the strong horizontal turbulent mixing.

In 1990s, Lions, Temam and Wang \cite{LTW92A,LTW92B,LTW95} initialed the mathematical
studies on the primitive equations, where among other issues they established the global
existence of weak solutions. The uniqueness of weak solutions for 2D case was later proven
by Bresch, Guill\'en-Gonz\'alez, Masmoudi and Rodr\'iguez-Bellido \cite{BGMR03}; however,
the uniqueness of weak solutions in the three-dimensional case is still unclear. Local existence of strong solutions was obtained by Guill\'en-Gonz\'alez, Masmoudi and Rodr\'iguez-Bellido \cite{GMR01}. Global existence
of strong solutions for 2D case was established by Bresch, Kazhikhov and Lemoine in
\cite{BKL04} and Temam and Ziane in \cite{TZ04}, while the 3D case was established by
Cao and Titi \cite{CAOTITI2}. Global strong solutions for 3D case were also obtained by
Kobelkov \cite{KOB06} later by using a different approach, see also
the subsequent articles Kukavica and Ziane \cite{KZ07A,KZ07B}. The systems considered in all the
papers \cite{CAOTITI2,KOB06,KZ07A,KZ07B} are assumed to have diffusion in all
directions. It is proven by Cao and Titi \cite{CAOTITI3} that these global existence
results still hold true for the system with only vertical diffusion, provided the local in
time strong solutions exist. As the complement of \cite{CAOTITI3}, local existence
results for the system with only vertical diffusion are recently given by Cao, Li and Titi \cite{CAOLITITI} with $H^2$ initial data. Notably, the inviscid primitive equation, with or without coupling to the heat equation has been shown by Cao et al \cite{CINT} to blow up in finite time (see also \cite{TKW}).

In this paper, we consider the primitive equations with only horizontal diffusion in the temperature equation, i.e. system (\ref{1.1-1})--(\ref{1.4-1}).
The aim of this paper is to show that the strong solutions exist globally for system (\ref{1.1-1})--(\ref{1.4-1}) subject to some initial and boundary conditions.
More precisely, we consider the problem in the domain $\Omega_0=M\times(-h,0)$, with $M=(0,1)\times(0,1)$, and supplement system (\ref{1.1-1})--(\ref{1.4-1}) with the following boundary and initial conditions:
\begin{eqnarray}
& v, w\mbox{ and } T \mbox{ are }\mbox{periodic in }x \mbox{ and }y,\label{1.5-1}\\
&(\partial_zv,w)|_{z=-h,0}=(0,0),\quad T|_{z=-h}=1,\quad T|_{z=0}=0,\label{1.6-1}\\
&(v,T)|_{t=0}=(v_0, T_0). \label{1.7-1}
\end{eqnarray}

Replacing $T$ and $p$ by $T+\frac{z}{h}$ and $p-\frac{z^2}{2h}$, respectively, then system (\ref{1.1-1})--(\ref{1.4-1}) with (\ref{1.5-1})--(\ref{1.7-1}) is equivalent to the following system
\begin{eqnarray}
&\partial_tv+(v\cdot\nabla_H)v+w\partial_zv+\nabla_Hp-\Delta v+f_0k\times v=0,\label{1-1.1}\\
&\partial_zp+T=0,\label{1-1.2}\\
&\nabla_H\cdot v+\partial_zw=0,\label{1-1.3}\\
&\partial_tT+(v\cdot\nabla_H)T+w\left(\partial_zT+\frac{1}{h}\right)-\Delta_HT=0,\label{1-1.4}
\end{eqnarray}
subject to the boundary and initial conditions
\begin{eqnarray}
& v, w, T \mbox{ are }\mbox{periodic in }x \mbox{ and }y ,\label{1-1.5}\\
&(\partial_zv,w)|_{z=-h,0}=0,\quad T|_{z=-h,0}=0,\label{1-1.6}\\
&(v,T)|_{t=0}=(v_0, T_0). \label{1-1.7}
\end{eqnarray}
Here, for simplicity, we still use $T_0$ to denote the initial temperature in (\ref{1-1.7}), though it is now different from that in (\ref{1.7-1}).

Notice that the periodic subspace $\mathcal H$, given by
\begin{align*}
  \mathcal H:=&\{(v,w,p,T)|v,w,p\mbox{ and }T\mbox{are spatially periodic in all three variables} \\
  &\mbox{and even, odd, even and odd in }z \mbox{ variable},\mbox{ respectively}\},
\end{align*}
is invariant under the dynamics system (\ref{1-1.1})--(\ref{1-1.4}). That is if the initial data satisfy the properties stated in the definition of $\mathcal H$, then, as we will see later (see Theorem \ref{thm1}), the solutions to system (\ref{1-1.1})--(\ref{1-1.4}) will obey the same symmetry as the initial data. This motivated us to consider the following system
\begin{eqnarray}
&\partial_tv+(v\cdot\nabla_H)v+w\partial_zv+\nabla_Hp-\Delta v+f_0k\times v=0,\label{1.1}\\
&\partial_zp+T=0,\label{1.2}\\
&\nabla_H\cdot v+\partial_zw=0,\label{1.3}\\
&\partial_tT+(v\cdot\nabla_H)T+w\left(\partial_zT+\frac{1}{h}\right)-\Delta_HT=0,\label{1.4}
\end{eqnarray}
in $\Omega:=M\times(-h,h)$, subject to the boundary and initial conditions
\begin{eqnarray}
& v, w, p \mbox{ and } T \mbox{ are }\mbox{periodic in }x, y, z,\label{1.5}\\
& v\mbox{ and }p \mbox{ are even in }z,\mbox{ and } w\mbox{ and }T\mbox{ are odd in }z,\label{1.6}\\
&(v,T)|_{t=0}=(v_0, T_0). \label{1.7}
\end{eqnarray}
One can easily check that the restriction on the sub-domain $\Omega_0$ of a solution $(v,w,p,T)$ to system (\ref{1.1})--(\ref{1.7}) is a solution to the original system (\ref{1-1.1})--(\ref{1-1.7}).
Because of this, throughout this paper, we mainly concern on the study of system (\ref{1.1})--(\ref{1.7}) defined on $\Omega$, while the well-posedness results for system (\ref{1-1.1})--(\ref{1-1.7}) defined on $\Omega_0$ follow as a corollary of those for system (\ref{1.1})--(\ref{1.7}).

For any function $\phi(x,y,z)$ defined on $\Omega$, we define functions $\bar\phi$ and $\tilde\phi$ as follows
$$
\bar\phi(x,y)=\frac{1}{2h}\int_{-h}^h\phi(x,y,z)dz,\qquad\tilde\phi=\phi-\bar\phi.
$$
Using these notations, system (\ref{1.1})--(\ref{1.7})
is equivalent to (see \cite{CAOTITI3} for example)
\begin{eqnarray}
&\partial_tv-\Delta v+(v\cdot\nabla_H)v-\left(\int_{-h}^z\nabla_H\cdot v(x,y,\xi,t)d\xi\right)\partial_zv\nonumber\\
&\quad\qquad+f_0k\times v+\nabla_H\left(p_s(x,y,t)-\int_{-h}^zT(x,y,\xi,t)d\xi\right)=0,\label{1.8}\\
&\nabla_H\cdot\bar v=0,\label{1.9}\\
&\partial_tT-\Delta_HT+v\cdot\nabla_HT-\left(\int_{-h}^z\nabla_H\cdot v(x,y,\xi,t)d\xi\right)\left(\partial_zT+\frac{1}{h}\right)=0,\label{1.10}
\end{eqnarray}
complemented with the following boundary and initial conditions
\begin{eqnarray}
&v\mbox{ and } T \mbox{ are periodic in }x, y, z,\label{1.11}\\
&v\mbox{ and } T \mbox{ are even and odd in }z,\mbox{ respectively},\label{1.12}\\
&(v,T)|_{t=0}=(v_0, T_0). \label{1.13}
\end{eqnarray}

Throughout this paper, we use $L^q(\Omega), L^q(M)$ and $W^{m,q}(\Omega), W^{m,q}(M)$ to denote the standard Lebesgue and Sobolev spaces, respectively. For $q=2$, we use $H^m$ instead of $W^{m,2}$. We use $W_{\text{per}}^{m,q}(\Omega)$ and $H^m_{\text{per}}$ to denote the spaces of periodic functions in $W^{m,q}(\Omega)$ and $H^m(\Omega)$, respectively. For simplicity, we use the same notations $L^p$ and $H^m$ to denote the $N$ product spaces $(L^p)^N$ and $(H^m)^N$, respectively. We always use $\|u\|_p$ to denote the $L^p$ norm of $u$.

Definitions of strong solution, maximal existence time and global strong solution to system (\ref{1.8})--(\ref{1.13}) are given by the following three definitions, respectively.

\begin{definition}\label{def1.1}
Given a positive number $t_0$. Let $v_0\in H^2(\Omega)$ and $T_0\in H^2(\Omega)$ be two periodic functions, such that they are even and odd in $z$, respectively. A couple $(v,T)$ is called a strong solution to system
(\ref{1.8})-(\ref{1.13}) (or equivalently (\ref{1.1})--(\ref{1.7})) on $\Omega\times(0,t_0)$ if

(i) $v$ and $T$ are periodic in $x,y,z$, and they are even and odd in $z$, respectively;

(ii) $v$ and $T$ have the regularities
\begin{align*}
&v\in L^\infty(0,t_0; H^2(\Omega))\cap C([0,t_0];H^1(\Omega))\cap L^2(0,t_0; H^3(\Omega)), \\
&T\in L^\infty(0,t_0; H^2(\Omega))\cap C([0,t_0];H^1(\Omega)),\quad\nabla_HT\in L^2(0,t_0; H^2(\Omega)), \\
&\partial_tv\in L^2(0,t_0; H^1(\Omega)),\quad\partial_tT\in L^2(0,t_0; H^1(\Omega));
\end{align*}

(iii) $v$ and $T$ satisfies (\ref{1.8})--(\ref{1.10}) a.e. in $\Omega\times(0,t_0)$ and the initial condition (\ref{1.13}).
\end{definition}

\begin{definition}
A finite positive number $\mathcal{T}^*$ is called the maximal existence time of a strong solution $(v,T)$ to system (\ref{1.8})--(\ref{1.13}) if $(v,T)$ is a strong solution to the system on
$\Omega\times(0, t_0)$ for any $t_0<\mathcal T^*$ and $
\displaystyle\lim_{t\rightarrow \mathcal T_-^*}(\|v\|_{H^2}^2+\|T\|_{H^2}^2)=\infty.$
\end{definition}

\begin{definition}
A couple $(v,T)$ is called a global strong solution to system (\ref{1.8})--(\ref{1.13}) if it is a strong solution on $\Omega\times(0,t_0)$ for any $t_0<\infty$.
\end{definition}

The main result of this paper is the following global existence result.

\begin{theorem}\label{thm1}
Suppose that the periodic functions $v_0,T_0\in H^2(\Omega)$ are even and odd in $z$, respectively. Then system (\ref{1.8})-(\ref{1.13}) (or equivalently (\ref{1.1})--(\ref{1.7})) has a unique global strong solution $(v,T)$.
\end{theorem}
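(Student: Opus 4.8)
\section{Proof strategy}

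The plan is to follow the standard scheme for global well-posedness of primitive-type equations: first establish local existence and uniqueness of a strong solution via a suitable approximation (Galerkin or parabolic regularization) scheme, and then derive \emph{a priori} estimates on $\|v\|_{H^2}^2+\|T\|_{H^2}^2$ that remain finite on any finite time interval, so that the continuation criterion in the definition of maximal existence time rules out finite-time blow up. The key difficulty, and the point where this problem genuinely differs from the fully viscous case treated in \cite{CAOTITI2}, is that the temperature equation \eqref{1.10} has only horizontal dissipation $-\Delta_H T$, so there is no smoothing in the $z$-direction for $T$; all the vertical regularity of $T$ must be extracted from the structure of the coupling (through $\partial_z p = -T$, i.e.\ $T$ enters the velocity equation only through a vertical integral) and transported along the flow, rather than gained from diffusion.

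I would organize the \emph{a priori} estimates in the usual hierarchy, each step feeding the next. Step one: basic $L^2$ energy estimates for $v$ and $T$, using the cancellation $\int_\Omega \big[(v\cdot\nabla_H)\phi + w\partial_z\phi\big]\phi\,=\,0$ (valid because $\nabla_H\cdot v+\partial_z w=0$ and the boundary terms vanish by the periodicity/parity conditions \eqref{1.11}--\eqref{1.12}); the only new term is the forcing $w/h$ in \eqref{1.10}, which is controlled by $\|w\|_2\|T\|_2$ and $\|w\|_2\le C\|\nabla_H v\|_2$. Step two: estimates on $\bar v$ (the barotropic mode, which solves a 2D Navier--Stokes-type system with an extra forcing coming from $\widetilde{(v\cdot\nabla_H)v}$ and the temperature term) and on $\tilde v$; here I expect to need the logarithmic Sobolev / anisotropic inequalities from \cite{CAOTITI2,CAOTITI3} to close the $H^1$ bound for $v$. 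Step three: $L^2$-in-time, $H^1$-in-space bounds, then $H^1$ bounds on $\partial_z v$ and on $T$, exploiting that $\partial_z v$ satisfies an equation with full Laplacian dissipation. Step four: the $H^2$ estimate, which one typically reduces — via the equations — to controlling $\|\Delta v\|_2$, $\|\nabla\partial_t v\|_2$-type quantities and $\|\nabla_H T\|_{H^1}$, again using anisotropic estimates to handle the nonlinear terms $w\partial_z v$ and $w\partial_z T$ and the vertical-integral pressure term.

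The crucial step — the one I expect to be the main obstacle — is controlling the temperature in $H^1$ and then $H^2$ \emph{without} vertical diffusion. The natural move is to differentiate \eqref{1.10} in $z$: setting $\theta=\partial_z T$, one gets $\partial_t\theta-\Delta_H\theta+v\cdot\nabla_H\theta + \partial_z v\cdot\nabla_H T - (\nabla_H\cdot v)(\theta+\tfrac1h) - \big(\int_{-h}^z\nabla_H\cdot v\big)\partial_z\theta =0$, an equation with only horizontal dissipation and a stretching-type term $(\nabla_H\cdot v)\theta$. To close this one must borrow integrability of $\nabla_H\cdot v$ in $L^1_t L^\infty_{x,y}$ (or an anisotropic substitute) from the velocity estimates already obtained, and use a Gronwall argument; the parity condition that $T$ is odd in $z$ (hence $\theta$ is even and the boundary contributions at $z=\pm h$, equivalently $z=-h,0$ in the original domain, are consistent) is what makes the integration by parts legitimate. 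Once $\partial_z T$ is controlled in the relevant norms, the horizontal derivatives of $T$ come from the $-\Delta_H T$ dissipation in a more routine way, and the velocity $H^2$ estimate closes as in \cite{CAOTITI2,CAOTITI3}. Uniqueness follows by a standard energy estimate on the difference of two solutions in a lower-order norm (e.g.\ $L^2$ for $v$ and $T$ together with $H^1$ for the difference of velocities), using the regularity in Definition \ref{def1.1} to justify all the integrations by parts; the symmetry/parity claim in the statement is immediate once uniqueness is known, since the reflected solution solves the same system with the same data.
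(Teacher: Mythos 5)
Your overall scheme is essentially the one the paper follows: local existence by a regularization (the paper adds a vanishing vertical diffusivity $\varepsilon\partial_z^2T$ to \eqref{1.10}, proves $H^2$ bounds on a time interval independent of $\varepsilon$, and passes to the limit by Aubin--Lions); uniqueness and continuous dependence by an $L^2$ energy estimate on the difference using the anisotropic inequalities of Lemma \ref{lem2.2}; and global a priori estimates in exactly your hierarchy --- $L^2$, then the $L^6$/$H^1$-type bounds imported from \cite{CAOTITI3} (Proposition \ref{lem3.1}), then $H^1$ bounds on $u=\partial_zv$ exploiting that $u$ satisfies an equation with full Laplacian dissipation (Proposition \ref{lem3.2}), and finally the second-order horizontal estimates on $v$ coupled with the $H^2$ estimate on $T$ (Proposition \ref{lem3.3}). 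So at the level of architecture there is nothing to object to.

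The one place where your concrete proposal would run into trouble is the step you yourself identify as crucial: the vertical regularity of $T$. You propose to test the equation for $\theta=\partial_zT$ and close a Gronwall inequality using $\nabla_H\cdot v\in L^1_tL^\infty_{x,y}$. That norm is not available at that stage: after Propositions \ref{lem3.1} and \ref{lem3.2} one controls $\bar v$ in $L^2_tH^2(M)$ and $\partial_zv$ in $L^2_tH^2(\Omega)$, but $\nabla_H^2\tilde v$ (hence anything like $\|\nabla_Hv\|_{L^\infty}$) is only obtained in the final step, whose differential inequality in turn contains $\|\Delta_HT\|_2^2$ on the right-hand side --- so the argument as stated is circular. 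The paper resolves this by \emph{not} estimating $\partial_zT$ separately: it applies $\nabla$ to \eqref{1.10}, tests with $-\nabla\Delta T$, and uses a chain of integrations by parts to move every $\partial_z$ that would land on $T$ without an accompanying horizontal dissipation onto $v$ or onto a vertical integral, so that each term is controlled by Lemma \ref{lem2.2} together with $\|T\|_\infty$, $\|\nabla^2u\|_2$, $\|\Delta_H\bar v\|_{L^2(M)}$ and a linear occurrence of $\|\nabla_H\Delta v\|_2$. The resulting inequality for $\|\Delta T\|_2^2$ is then added to a large multiple of the inequality for $\|\Delta_Hv\|_2^2$, so that the dissipation $\|\nabla\Delta_Hv\|_2^2$ absorbs the $\|\nabla_H\Delta v\|_2^2$ term (the remainder $\|\nabla_H\partial_zu\|_2^2$ being already integrable by Proposition \ref{lem3.2}), and the two estimates close \emph{simultaneously} by Gronwall. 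Your hedge ``or an anisotropic substitute'' points in the right direction, but to make the proof work you must either carry out this coupled closure or find another way around the circularity; the $L^1_tL^\infty$ route does not survive as written.
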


Local existence of strong solutions are obtained by a regularization mechanism.
More precisely, we add the vertical diffusion term, with a diffusion coefficient $\varepsilon>0$,
in the temperature equation to obtain
a regularized system. We then establish uniform estimates, in $\varepsilon$, for
strong solutions of the regularized system, over a short interval of time independent of $\varepsilon$, and then take the limit, as $\varepsilon$ goes to zero, to obtain
local strong solutions to system (\ref{1.8})--(\ref{1.13}). To obtain the global strong
solutions, from the local existence results, we need to establish the a priori estimates
on the derivatives of the solution, up to the second order. The first crucial estimate is the $L^6$
estimate on $v$, which has been originally obtained by Cao and Titi in
\cite{CAOTITI2,CAOTITI3}. Next, we establish the estimates on the derivatives.
Resulting from the lack of sufficient information on the equation for the vertical
velocity $w$, and the absence of the vertical
diffusion in the temperature equation, the treatments of different derivatives will vary. In particular, when we deal with the derivatives of
$v$ of the same order, we first work on the vertical derivatives and then the horizontal ones. The reason for this is due to the fact
that we need the estimates on the vertical derivatives to handle the term of the form
$\left(\int_{-h}^z\nabla_H\cdot v d\xi\right)\partial_zv$, which has "stronger nonlinearity"
than the term of the form $(v\cdot\nabla_H)v$. Keeping this in mind and making use of the
$L^6$ estimates for $v$, we successfully obtain the estimates on $\partial_z^2v$, then on
$\nabla_H\partial_zv$ and finally on $\nabla^2v$ and $\nabla^2T$. As a result, we obtain
the a priori estimates which guarantee the global existence of strong solutions.

As a corollary of Theorem \ref{thm1}, we have the following theorem, which states the well-posedness of strong solutions to system (\ref{1-1.1})--(\ref{1-1.7}). The strong solutions to system (\ref{1-1.1})--(\ref{1-1.7}) are defined in the similar way as before.

\begin{theorem}\label{thm2}
Let $v_0$ and $T_0$ be two functions such that they are periodic in $x$ and $y$. Denote by $v_0^{ext}$ and $T_0^{ext}$ the even and odd extensions in $z$ of $v_0$ and $T_0$, respectively. Suppose that $v_0^{ext}, T_0^{ext}\in H^2_{\text{per}}(\Omega)$. Then system (\ref{1-1.1})--(\ref{1-1.7}) has a unique global strong solution $(v,T)$.
\end{theorem}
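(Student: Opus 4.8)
The plan is to derive Theorem \ref{thm2} from Theorem \ref{thm1} by the symmetry/reflection argument already suggested in the discussion preceding system (\ref{1.1})--(\ref{1.7}). Given periodic (in $x,y$) data $v_0,T_0$ with even/odd $H^2_{\text{per}}(\Omega)$ extensions $v_0^{ext},T_0^{ext}$, apply Theorem \ref{thm1} to $(v_0^{ext},T_0^{ext})$ to obtain a unique global strong solution $(v,T)$ on $\Omega=M\times(-h,h)$ that is periodic in all three variables, with $v$ even and $T$ odd in $z$ (and, recovering $w$ and $p$ from (\ref{1.3}) and (\ref{1.2}), $w$ odd and $p$ even in $z$). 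Restricting $(v,w,p,T)$ to the subdomain $\Omega_0=M\times(-h,0)$ then yields, as already observed in the text, a solution of (\ref{1-1.1})--(\ref{1-1.4}); one must check that the boundary conditions (\ref{1-1.6}) are met: evenness of $v$ in $z$ forces $\partial_z v=0$ at $z=0$ and at $z=-h$ (using periodicity across $z=\pm h$), oddness of $w$ and $T$ forces $w=T=0$ at $z=0$ and $z=-h$, and the initial condition (\ref{1-1.7}) holds since the restriction of the even/odd extension to $\Omega_0$ is the original datum. The regularity class in Definition \ref{def1.1}, restricted to $\Omega_0$, is exactly the regularity required for a strong solution of (\ref{1-1.1})--(\ref{1-1.7}).

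Next I would address existence of $w$ and $p$ and their required properties more carefully. The reformulation (\ref{1.8})--(\ref{1.10}) does not contain $w$ or $p$ explicitly, so after solving that system one recovers $w(x,y,z,t)=-\int_{-h}^z\nabla_H\cdot v(x,y,\xi,t)\,d\xi$ and $p(x,y,z,t)=p_s(x,y,t)-\int_{-h}^z T(x,y,\xi,t)\,d\xi$ for a suitable surface pressure $p_s$. Using that $v$ is even in $z$ and periodic, $\nabla_H\cdot v$ is even, so $w$ is odd in $z$; since $T$ is odd, $\int_{-h}^z T\,d\xi$ is even, so $p$ is even provided $p_s$ is chosen consistently (this is where the evenness part of the invariant subspace $\mathcal H$ enters). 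One also checks $\nabla_H\cdot\bar v=0$ gives periodicity of $w$ across $z=\pm h$. These are the symmetry bookkeeping steps; none is deep, but they need to be stated.

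For uniqueness in Theorem \ref{thm2}, I would argue by reflection in the other direction: given any strong solution $(v,T)$ of (\ref{1-1.1})--(\ref{1-1.7}) on $\Omega_0$, extend it evenly/oddly in $z$ to $\Omega$. The boundary conditions (\ref{1-1.6}) are precisely the compatibility conditions that make the even extension of $v$ lie in $H^2_{\text{per}}$ across $z=0$ and $z=-h$ (no jump in the trace of $\partial_z v$) and the odd extension of $T$ lie in $H^2_{\text{per}}$ (no jump in the trace of $T$); the reflected pair then solves (\ref{1.8})--(\ref{1.13}) with data $(v_0^{ext},T_0^{ext})$, and it has the regularity of Definition \ref{def1.1}. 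By the uniqueness part of Theorem \ref{thm1} this extension is the unique global strong solution with those data, hence its restriction to $\Omega_0$ — namely the original $(v,T)$ — is uniquely determined. Combined with the existence construction above, this proves Theorem \ref{thm2}.

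The main obstacle, and the only place that requires genuine care rather than bookkeeping, is verifying that the even/odd extension of an $H^2(\Omega_0)$ function satisfying the boundary conditions (\ref{1-1.6}) actually belongs to $H^2_{\text{per}}(\Omega)$ — i.e. that the extension does not create spurious singularities in the second derivatives along the gluing planes $z=0$ and $z=-h$. For the even extension of $v$ this needs $\partial_z v|_{z=-h,0}=0$ (so that $\partial_z v$, which becomes odd, has vanishing trace and no jump), which is exactly (\ref{1-1.6}); for the odd extension of $T$ one needs $T|_{z=-h,0}=0$, again (\ref{1-1.6}). I would record this as an elementary lemma on reflection of Sobolev functions and then the rest of the proof is immediate. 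I expect the whole argument to be short, essentially a corollary-style deduction, since all the analysis is contained in Theorem \ref{thm1}.
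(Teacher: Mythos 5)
Your proposal is correct and, for the existence part, is exactly the paper's argument: apply Theorem \ref{thm1} to $(v_0^{ext},T_0^{ext})$ and restrict the resulting solution to $\Omega_0$, the symmetry bookkeeping (oddness of $w$ and $T$, evenness of $v$ and $p$, hence the boundary conditions (\ref{1-1.6})) being the content of the invariance of the subspace $\mathcal H$ already noted in the introduction. The paper disposes of all of this in one paragraph after the statement of Theorem \ref{thm2}. The only place you diverge is uniqueness: the paper asserts that uniqueness ``can be proven in the same way as that for Theorem \ref{thm1}'', i.e.\ by running the $L^2$ energy estimate of Proposition \ref{propcont} directly on the half-domain $\Omega_0$ (the boundary conditions $\partial_z v=w=T=0$ at $z=-h,0$ kill the boundary terms in the integrations by parts, so the argument transfers verbatim), whereas you reflect an arbitrary strong solution on $\Omega_0$ evenly/oddly to $\Omega$ and invoke the uniqueness already established there. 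Both routes work; yours correctly identifies the one nontrivial verification it requires, namely that the even/odd extension of an $H^2(\Omega_0)$ function satisfying (\ref{1-1.6}) lies in $H^2_{\text{per}}(\Omega)$ (the same compatibility issue the paper flags, via Remark 1.1 of \cite{CAOLITITI}, as making the hypothesis $v_0^{ext},T_0^{ext}\in H^2_{\text{per}}(\Omega)$ necessary), while the paper's route avoids any reflection of the solution at the cost of re-deriving the energy inequality on $\Omega_0$. Either way the deduction is the short, corollary-style argument you anticipate.
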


The existence part follows directly by applying Theorem \ref{thm1} with initial data $(v_0^{\text{ext}}, T_0^{\text{ext}})$ and restricting the solution on the sub-domain $\Omega_0$. While the uniqueness part can be proven in the same way as that for Theorem \ref{thm1}. It should be pointed out that, due to the same reason as stated in Remark 1.1 in \cite{CAOLITITI}, the condition that $v_0^{ext}, T_0^{ext}\in H^2_{\text{per}}(\Omega)$ in the above theorem is necessary for the existence of strong solutions to system (\ref{1-1.1})--(\ref{1-1.7}).

The rest of the paper is organized as follows: in the next section, section \ref{sec2}, we prove the local existence of strong solutions; in section \ref{sec3}, by establishing the necessary a priori estimates, we show that the local strong solution can be extended to be a global one, and thus obtain a global strong solution.

Throughout this paper, we use $C$ to denote a general constant which may be different from line to line.

\section{Local Existence of Strong Solutions}\label{sec2}

In this section, we establish the local existence of strong solutions to system (\ref{1.1})--(\ref{1.7}), or equivalently system (\ref{1.8})--(\ref{1.13}).

We first cite the following proposition on the local existence of strong solutions to the system with full diffusion (see Proposition 2.1 of \cite{CAOLITITI}).

\begin{proposition}\label{lem2.1}
Let $v_0\in H^2(\Omega)$ and $T_0\in H^2(\Omega)$ be two periodic functions, such that they are even and odd in $z$, respectively. Then for any given $\varepsilon>0$, there is a $t_\varepsilon>0$, depending on $\varepsilon$, and a unique strong solutions $(v_\varepsilon, T_\varepsilon)$, with $(v_\varepsilon, T_\varepsilon)\in L^\infty(0,t_\varepsilon; H^2(\Omega))\cap C([0,t_\varepsilon];H^1(\Omega))\cap L^2(0,t_\varepsilon; H^3(\Omega))$ and $(\partial_t v_\varepsilon, \partial_t T_\varepsilon)\in L^2(0,t_\varepsilon;H^1(\Omega))$, to the following system
\begin{eqnarray}
&\partial_tv-\Delta v+(v\cdot\nabla_H)v-\left(\int_{-h}^z\nabla_H\cdot v(x,y,\xi,t)d\xi\right)\partial_zv\nonumber\\
&\quad\qquad+f_0k\times v+\nabla_H\left(p_s(x,y,t)-\int_{-h}^zT(x,y,\xi,t)d\xi\right)=0,\label{2.1}\\
&\nabla_H\cdot\bar v=0,\label{2.2}\\
&\partial_tT-\Delta_HT-\varepsilon\partial_z^2T+v\cdot\nabla_HT-\left(\int_{-h}^z\nabla_H\cdot v(x,y,\xi,t)d\xi\right)\left(\partial_zT+\frac{1}{h}\right)=0,\label{2.3}
\end{eqnarray}
subject to the boundary and initial conditions (\ref{1.11})--(\ref{1.13}).
\end{proposition}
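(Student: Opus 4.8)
The plan is to establish Proposition~\ref{lem2.1} by a Galerkin approximation that respects the periodicity and the even/odd symmetry in $z$, together with a hierarchy of a priori estimates valid on a short time interval, a compactness passage to the limit, and an energy argument for uniqueness. Because the velocity equation~(\ref{2.1}) carries the full Laplacian $-\Delta v$ and the temperature equation~(\ref{2.3}) carries $-\Delta_HT-\varepsilon\partial_z^2T$, the system is fully (though anisotropically) parabolic, so the structure follows the classical local theory for the viscous primitive equations; the vertical dissipation $\varepsilon\partial_z^2T$ is exactly what allows the vertical-advection contribution in the temperature estimates to be absorbed, which is why $\varepsilon>0$ is used and why the existence time $t_\varepsilon$ is permitted to depend on $\varepsilon$.

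First I would choose a Galerkin basis of trigonometric functions compatible with~(\ref{1.11})--(\ref{1.12}) --- even modes in $z$ for $v$, odd modes for $T$ --- so that each approximation stays in the symmetry class $\mathcal H$, and I would eliminate the nonlocal surface pressure $p_s$ by projecting the velocity equation onto divergence-free barotropic modes: writing $v=\bar v+\tilde v$ with $\nabla_H\cdot\bar v=0$, the term $\nabla_Hp_s$ is orthogonal to the solenoidal barotropic test functions, while $\nabla_H\!\int_{-h}^zT\,d\xi$ is retained as a forcing term. For the approximate solutions $(v_n,T_n)$ one then derives, in order: (i) the basic $L^2$ energy identity, in which $(v\cdot\nabla_H)v-(\int_{-h}^z\nabla_H\cdot v\,d\xi)\partial_zv$ and the analogous temperature nonlinearity integrate to zero by incompressibility, the $\frac1h$ forcing contributing only a lower-order term; (ii) the $H^1$ estimate by testing~(\ref{2.1}) with $-\Delta v_n$ (and~(\ref{2.3}) with $-\Delta_HT_n-\varepsilon\partial_z^2T_n$); and (iii) the $H^2$ estimate by differentiating the equations and testing with the corresponding second-order multipliers. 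Using the three-dimensional anisotropic Ladyzhenskaya and Agmon inequalities, each nonlinear term is bounded by $C\,\mathcal P(\|v_n\|_{H^2}+\|T_n\|_{H^2})$ plus a small multiple of the dissipation; the resulting differential inequalities are of Riccati type, hence give bounds uniform in $n$ only on an interval $[0,t_\varepsilon]$ whose length depends on $\varepsilon$ and on $\|v_0\|_{H^2}+\|T_0\|_{H^2}$. Together with the induced bounds on $\partial_tv_n$ and $\partial_tT_n$ in $L^2(0,t_\varepsilon;H^1)$, these are precisely the regularities claimed in the proposition.

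I expect the main obstacle to be the $H^2$ estimate for the velocity, specifically the control of the vertical-advection term $(\int_{-h}^z\nabla_H\cdot v\,d\xi)\partial_zv$, which is ``more nonlinear'' than $(v\cdot\nabla_H)v$ because the vertical integral already carries one horizontal derivative of $v$. After two spatial derivatives one must bound terms such as $\int_\Omega\big(\int_{-h}^z\nabla_H\cdot\nabla^2v\,d\xi\big)\partial_zv\cdot\nabla^2v$ and $\int_\Omega\big(\int_{-h}^z\nabla_H\cdot\nabla v\,d\xi\big)\partial_z\nabla v\cdot\nabla^2v$; the safe route is to keep the vertical integral in a mixed norm exploiting the $z$-average structure, so that $\|\int_{-h}^z\nabla_H\cdot v\,d\xi\|_{L^\infty_z L^q_{xy}}\le C\|\nabla_Hv\|_{L^q_{xy}L^1_z}$, to interpolate $\partial_zv$ and $\nabla^2v$ between $L^2$ and higher Sobolev norms, and to absorb the top-order factor $\|\nabla^3v\|_2$ into the viscous dissipation coming from $-\Delta v$. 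For the temperature, the analogous term $(\int_{-h}^z\nabla_H\cdot v\,d\xi)\partial_zT$ produces a top-order factor involving $\partial_z^2\nabla_HT$ and $\partial_z^3T$ that must be absorbed into the $\varepsilon$-dissipation $\varepsilon\|\partial_z^2T\|_{H^1}^2$ --- the one point where $\varepsilon>0$ is indispensable and where $t_\varepsilon$ degenerates as $\varepsilon\to0$. Once the uniform-in-$n$ bounds are secured, Aubin--Lions compactness yields a limit $(v_\varepsilon,T_\varepsilon)$ solving~(\ref{2.1})--(\ref{2.3}) a.e.\ with the stated regularity; the time-continuity into $H^1$ and the attainment of the initial data follow from the $\partial_t$ bounds, and uniqueness is obtained by writing the equations for the difference of two solutions, testing in $L^2$, estimating the difference of the nonlinearities via the $L^\infty_tH^2$ bounds, and invoking a Gronwall argument.
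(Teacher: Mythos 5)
The paper does not actually prove Proposition \ref{lem2.1}: it is imported verbatim as Proposition 2.1 of \cite{CAOLITITI}, so your proposal can only be compared against the standard construction that the citation points to. Your sketch is essentially that construction: a symmetry-preserving trigonometric Galerkin basis, elimination of $p_s$ through the solenoidal barotropic projection, the $L^2$--$H^1$--$H^2$ energy hierarchy closed by Riccati-type differential inequalities on a short interval, Aubin--Lions compactness, and an $L^2$ energy/Gronwall uniqueness argument; since \eqref{2.1}--\eqref{2.3} is fully (if anisotropically) parabolic for $\varepsilon>0$, all of this goes through. The one place where your route diverges in substance is the treatment of the vertical-advection term in the temperature equation: you absorb the top-order contribution into the $\varepsilon$-dissipation and conclude that $\varepsilon>0$ is indispensable there and that $t_\varepsilon$ degenerates as $\varepsilon\to0$. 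That is admissible for the proposition as stated, since $t_\varepsilon$ is allowed to depend on $\varepsilon$, but it is not forced: Proposition \ref{lem2.3} of this paper shows that the very same terms can be controlled using only the horizontal diffusion, via the anisotropic estimates of Lemma \ref{lem2.2} (integrating the worst factor in $z$ first and paying only $\|\nabla_H\Delta T_\varepsilon\|_2$), which is precisely what yields an existence time and bounds uniform in $\varepsilon$. So your argument is a correct route to Proposition \ref{lem2.1}; the only caution is that if you carried the $\varepsilon$-dependent absorption forward you would lose the uniformity that the rest of the paper requires, and the anisotropic Lemma \ref{lem2.2} is the tool that removes that dependence.
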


The following lemma will be used to obtain a uniform lower bound of the existence time, independent of $\varepsilon$, and the uniform in $\varepsilon$ estimates on the local strong solution $(v_\varepsilon, T_\varepsilon)$
obtained in Proposition \ref{lem2.1}. It also plays an important role in proving the uniqueness of strong solutions.

\begin{lemma} (see \cite{CAOTITI1})\label{lem2.2}
The following inequalities hold true
\begin{align*}
&\int_M\left(\int_{-h}^hf(x,y,z)dz\right)\left(\int_{-h}^hg(x,y,z)h(x,y,z)dz\right)dxdy\\
\leq&C\|f\|_2^{1/2}\left(\|f\|_2^{1/2}+\|\nabla_Hf\|_{2}^{1/2}\right)\|g\|_2\|h\|_2^{1/2}\left(
\|h\|_2^{1/2}+\|\nabla_Hh\|_2^{1/2}\right),
\end{align*}
and
\begin{align*}
&\int_M\left(\int_{-h}^hf(x,y,z)dz\right)\left(\int_{-h}^hg(x,y,z)h(x,y,z)dz\right)dxdy\\
\leq&C\|f\|_2\|g\|_2^{1/2}\left(\|g\|_2^{1/2}+\|\nabla_Hg\|_{2}^{1/2}\right)\|h\|_2^{1/2}\left(
\|h\|_2^{1/2}+\|\nabla_Hh\|_2^{1/2}\right),
\end{align*}
for every $f,g,h$ such that the right hand sides make sense and are finite.
\end{lemma}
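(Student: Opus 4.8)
The plan is to prove the first inequality; the second follows by the symmetric roles of $g$ and $h$ (or rather by applying the same argument but distributing the one-dimensional Sobolev gain onto $g$ rather than $h$). The key idea is a standard anisotropic estimate: for a function $\psi(z)$ on $[-h,h]$, one has the one-dimensional Agmon/Sobolev type bound $\|\psi\|_{L^\infty(-h,h)}^2 \le C\left(\|\psi\|_{L^2(-h,h)}^2 + \|\psi\|_{L^2(-h,h)}\|\psi_z\|_{L^2(-h,h)}\right)$, hence $\|\psi\|_{L^\infty_z} \le C\|\psi\|_{L^2_z}^{1/2}(\|\psi\|_{L^2_z}^{1/2}+\|\psi_z\|_{L^2_z}^{1/2})$. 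This will be applied in the $z$-variable to control $\int_{-h}^h f\,dz$ pointwise in $(x,y)$, after which what remains is a purely horizontal product estimate over $M$.

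First I would bound, for fixed $(x,y)$,
\[
\left|\int_{-h}^h f(x,y,z)\,dz\right| \le (2h)^{1/2}\left(\int_{-h}^h f^2\,dz\right)^{1/2} =: (2h)^{1/2} F(x,y),
\]
and similarly estimate $\int_{-h}^h g h\,dz$ by Cauchy--Schwarz in $z$ as $G(x,y)H(x,y)$ where $G(x,y)=(\int_{-h}^h g^2 dz)^{1/2}$, $H(x,y)=(\int_{-h}^h h^2 dz)^{1/2}$. Thus the left-hand side is bounded by $C\int_M F G H\,dxdy$. Now apply H\"older on $M$ with exponents $(2,2,\infty)$, putting the $L^\infty(M)$ on $H$: the integral is $\le C\|F\|_{L^2(M)}\|G\|_{L^2(M)}\|H\|_{L^\infty(M)}$. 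Observe $\|F\|_{L^2(M)} = \|f\|_{L^2(\Omega)}$ and $\|G\|_{L^2(M)} = \|g\|_{L^2(\Omega)}$, so two of the three desired factors appear. For the remaining factor, I would use the 2D Ladyzhenskaya/Agmon inequality on the square $M$: $\|H\|_{L^\infty(M)}$ is not controlled by $H^1(M)$ in 2D, so instead I revisit the choice and keep things in $L^4$. Redo the H\"older step with exponents $(2,4,4)$: the integral over $M$ is $\le \|F\|_{L^2(M)}\|G\|_{L^4(M)}\|H\|_{L^4(M)}$, but now $\|G\|_{L^4(M)}$ costs a horizontal derivative on $g$, which is not available in the stated inequality — only $\nabla_H f$ and $\nabla_H h$ appear. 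Hence the correct route is the first one with the $z$-direction Sobolev trick done more carefully: instead of Cauchy--Schwarz in $z$ for $\int f\,dz$, write $\int_{-h}^h f\,dz$ and note that as a function of $(x,y)$ it satisfies $\|\int_{-h}^h f\,dz\|_{L^4(M)}^2 \le C\|f\|_{L^2(\Omega)}(\|f\|_{L^2(\Omega)}+\|\nabla_H f\|_{L^2(\Omega)})$ — this is exactly the two-dimensional Ladyzhenskaya inequality applied to $\overline f$-type quantities, combined with $\|\int f dz\|_{L^2(M)}\le C\|f\|_{L^2(\Omega)}$ and $\|\nabla_H \int f dz\|_{L^2(M)}\le C\|\nabla_H f\|_{L^2(\Omega)}$. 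Then H\"older $(4,2,4)$ on $M$ gives $\le \|\int f dz\|_{L^4(M)}\,\|G\|_{L^2(M)}\,\|H\|_{L^4(M)}$, and applying the same Ladyzhenskaya bound to $H(x,y)=\|h(x,y,\cdot)\|_{L^2_z}$ yields $\|H\|_{L^4(M)}^2\le C\|h\|_{L^2(\Omega)}(\|h\|_{L^2(\Omega)}+\|\nabla_H h\|_{L^2(\Omega)})$. Collecting the three factors reproduces the claimed inequality; the second inequality is obtained identically with the roles of the "$g$-slot" and "$h$-slot" interchanged so that the Sobolev gain lands on $g$.

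The main obstacle, and the point requiring care, is the justification that $\|h(x,y,\cdot)\|_{L^2_z}$ — i.e. the function $H(x,y)$ — has horizontal gradient controlled by $\|\nabla_H h\|_{L^2(\Omega)}$: differentiating under the integral and using $|\nabla_H H| \le \|\nabla_H h(x,y,\cdot)\|_{L^2_z}/(\text{something})$ requires the elementary pointwise bound $|\nabla_H \|h(x,y,\cdot)\|_{L^2_z}| \le \|\nabla_H h(x,y,\cdot)\|_{L^2_z}$, valid wherever $H>0$, followed by an approximation argument ($H_\delta = \sqrt{H^2+\delta}$) to handle the zero set. Once that is in place, everything else is H\"older's inequality and the 2D Ladyzhenskaya inequality $\|\varphi\|_{L^4(M)}^2 \le C\|\varphi\|_{L^2(M)}\|\varphi\|_{H^1(M)}$ on the torus $M$, both of which are standard. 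I would remark that since this lemma is quoted from \cite{CAOTITI1}, one may alternatively simply cite it, but the sketch above indicates the mechanism for completeness.
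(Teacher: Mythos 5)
The paper does not prove this lemma---it only cites \cite{CAOTITI1}---and your sketch correctly reconstructs the standard argument from that reference: Cauchy--Schwarz in $z$ to reduce to $\int_M \big|\int f\,dz\big|\,G H\,dxdy$ with $G=\|g(x,y,\cdot)\|_{L^2_z}$, $H=\|h(x,y,\cdot)\|_{L^2_z}$, then H\"older on $M$ with exponents $(4,2,4)$ and the two-dimensional Ladyzhenskaya inequality, together with the pointwise bound $|\nabla_H H|\le\|\nabla_H h(x,y,\cdot)\|_{L^2_z}$; the final step $(a+b)^{1/2}\le a^{1/2}+b^{1/2}$ puts the bound in the stated form. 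One small correction: for the second inequality the Sobolev ($L^4$) gain moves from $f$ to $g$ (H\"older exponents $(2,4,4)$ instead of $(4,2,4)$), not ``onto $g$ rather than $h$'' --- the gain on $h$ is present in both inequalities.
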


We also need the following lemma on differentiation under the integral sign and integration by parts.

\begin{lemma}(see \cite{CAOLITITI})\label{lem2.0}
Let $f$ and $g$ be two spatial periodic functions such that
\begin{eqnarray*}
&f\in L^2(0,t_0; H^3(\Omega)),\quad\partial_tf\in L^2(0,t_0; H^1(\Omega)),\\
&g\in L^2(0,t_0; H^2(\Omega)),\quad\partial_tg\in L^2(0,t_0; L^2(\Omega)).
\end{eqnarray*}
Then it follows that
\begin{eqnarray*}
&&\frac{d}{dt}\int_\Omega|\Delta f|^2dxdydz=-2\int_\Omega\nabla\partial_tf\nabla\Delta fdxdydz,\\
&&\int_\Omega\nabla\partial_{x^i}^2f\nabla\Delta fdxdydz=\int_\Omega|\partial_{x^i}\Delta f|^2dxdydz
\end{eqnarray*}
and
\begin{eqnarray*}
&&\frac{d}{dt}\int_\Omega|\partial_{x^i} g|^2dxdydz=-2\int_\Omega\partial_tg\partial_{x^i}^2 gdxdydz,\\
&&\int_\Omega\partial_{x^i}^2g\partial_{x^j}^2 gdxdydz=\int_\Omega|\partial_{x^i}\partial_{x^j}g|^2dxdydz,
\end{eqnarray*}
for a.e. $t\in(0,t_0)$, where $x^i, x^j\in\{x,y,z\}$.
\end{lemma}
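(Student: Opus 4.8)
The plan is to split the four identities into two groups: the two involving $\frac{d}{dt}$, which are statements about differentiating a squared $L^2$-norm along a time-dependent function of limited regularity, and the two purely spatial identities, which are integration-by-parts formulas on the periodic box $\Omega$. Since $\Omega=M\times(-h,h)$ is a rectangular torus, trigonometric polynomials (equivalently, smooth periodic functions) are dense in every periodic Sobolev space $H^m_{\text{per}}(\Omega)$, and this density is the engine behind the spatial identities.

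For the two time-derivative identities I would invoke the standard Lions--Magenes/Temam lemma on a Gelfand triple: if $u\in L^2(0,t_0;V)$ with $u'\in L^2(0,t_0;V')$ for $V\hookrightarrow H\hookrightarrow V'$, then $u\in C([0,t_0];H)$ and $\frac{d}{dt}\|u\|_H^2=2\langle u',u\rangle_{V',V}$ for a.e. $t$. For the first identity I set $V=H^1_{\text{per}}(\Omega)$, $H=L^2(\Omega)$, $V'=H^{-1}_{\text{per}}(\Omega)$ and take $u=\Delta f$; the hypothesis $f\in L^2(0,t_0;H^3)$ gives $u\in L^2(0,t_0;H^1)$, while $\partial_t f\in L^2(0,t_0;H^1)$ gives $u'=\Delta\partial_t f\in L^2(0,t_0;H^{-1})$ (commuting $\Delta$ and $\partial_t$ distributionally). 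The lemma then yields $\frac{d}{dt}\|\Delta f\|_2^2=2\langle\Delta\partial_t f,\Delta f\rangle_{H^{-1},H^1}$, and unfolding the duality pairing of the weak Laplacian, $\langle\Delta\partial_t f,\Delta f\rangle=-\int_\Omega\nabla\partial_t f\cdot\nabla\Delta f$, gives exactly the claimed formula. The same argument with $u=\partial_{x^i}g$, using $g\in L^2(0,t_0;H^2)$ and $\partial_t g\in L^2(0,t_0;L^2)$, produces the third identity, the pairing $\langle\partial_{x^i}\partial_t g,\partial_{x^i}g\rangle_{H^{-1},H^1}=-\int_\Omega\partial_t g\,\partial_{x^i}^2 g$ accounting for the sign and the single remaining integration by parts.

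For the two spatial identities I would first verify them for smooth periodic functions by repeated integration by parts, the periodicity ensuring that all boundary terms vanish. Concretely, for smooth $f$ both $\int_\Omega\nabla\partial_{x^i}^2f\cdot\nabla\Delta f$ and $\int_\Omega|\partial_{x^i}\Delta f|^2$ equal $-\int_\Omega\partial_{x^i}^2 f\,\Delta^2 f$, and for smooth $g$ both $\int_\Omega\partial_{x^i}^2g\,\partial_{x^j}^2 g$ and $\int_\Omega|\partial_{x^i}\partial_{x^j}g|^2$ coincide after moving one $\partial_{x^i}$ and then the $\partial_{x^j}$'s across. I would then pass to $f\in H^3$ and $g\in H^2$ by density: each side of each identity is a continuous quadratic form on $H^3$, respectively $H^2$ (for instance the left side of the $f$-identity is controlled by $\|\partial_{x^i}^2 f\|_{H^1}\|\Delta f\|_{H^1}\le C\|f\|_{H^3}^2$ and the right side is $\|\partial_{x^i}\Delta f\|_2^2\le\|f\|_{H^3}^2$), so approximating by trigonometric polynomials and passing to the limit transfers the identity to the stated regularity class. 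Alternatively, both spatial identities can be read off instantly in Fourier variables on the torus, where they reduce on each Fourier mode $k$ to the algebraic equalities $k_i^2|k|^4=k_i^2|k|^4$ and $k_i^2k_j^2=k_i^2k_j^2$.

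The only genuine subtlety---and the step I would be most careful about---is the regularity bookkeeping in the time-derivative identities: one must resist integrating by parts all the way to $\int_\Omega\partial_t f\,\Delta^2 f$, which would illegitimately demand $f\in H^4$, and instead keep the pairing in the form $\int_\Omega\nabla\partial_t f\cdot\nabla\Delta f$ dictated by the memberships $\Delta f\in L^2(0,t_0;H^1)$ and $\Delta\partial_t f\in L^2(0,t_0;H^{-1})$. Verifying that $u'=\Delta\partial_t f$ really is the distributional time-derivative of $u=\Delta f$ in $H^{-1}$, so that the Lions--Magenes lemma applies verbatim, is the one place where the hypotheses must be used with precision; everything else is routine integration by parts together with the density argument.
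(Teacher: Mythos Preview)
Your proof is correct. The paper does not actually prove this lemma---it simply cites \cite{CAOLITITI} and moves on---so there is no in-paper argument to compare against; your Lions--Magenes/Gelfand-triple treatment of the time-derivative identities and your density (or Fourier) argument for the spatial identities supply exactly the details the paper defers to the reference.
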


The next lemma is a version of the Aubin-Lions lemma.

\begin{lemma}\label{AL}
(Aubin-Lions Lemma, See Simon \cite{Simon} Corollary 4) Assume that $X, B$ and $Y$ are three Banach spaces, with $X\hookrightarrow\hookrightarrow B\hookrightarrow Y.$ Then it holds that

(i) If $F$ is a bounded subset of $L^p(0, T; X)$ where $1\leq p<\infty$, and $\frac{\partial F}{\partial t}=\left\{\frac{\partial f}{\partial t}|f\in F\right\}$ is bounded in $L^1(0, T; Y)$, then $F$ is relatively compact in $L^p(0, T; B)$;

(ii) If $F$ is bounded in $L^\infty(0, T; X)$ and $\frac{\partial F}{\partial t}$ is bounded in $L^r(0, T; Y)$ where $r>1$, then $F$ is relatively compact in $C([0, T]; B)$.
\end{lemma}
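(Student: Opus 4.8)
The plan is to reduce both statements to two elementary building blocks: an interpolation (Ehrling--Lions) inequality that exploits the compact embedding $X\hookrightarrow\hookrightarrow B$, and a control of the time-modulus of continuity coming from the bound on $\partial_t F$. First I would establish the Ehrling inequality: for every $\eta>0$ there is a constant $C_\eta$ with
\[
\|u\|_B\le\eta\|u\|_X+C_\eta\|u\|_Y\qquad\text{for all }u\in X.
\]
This is proved by contradiction---if it failed for some $\eta_0$, one would obtain a sequence with $\|u_n\|_B=1$, $\|u_n\|_X$ bounded and $\|u_n\|_Y\to0$; compactness of $X\hookrightarrow\hookrightarrow B$ extracts a $B$-convergent subsequence whose limit must vanish in $Y$ (hence in $B$, by $B\hookrightarrow Y$), contradicting $\|u_n\|_B=1$.

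For part (i) I would invoke the Fr\'echet--Kolmogorov--Riesz characterization of relative compactness in $L^p(0,T;B)$ (the underlying result in Simon): a bounded family $F\subset L^p(0,T;B)$ is relatively compact if and only if (a) $\{\int_{t_1}^{t_2}f\,dt:f\in F\}$ is relatively compact in $B$ for all $0\le t_1<t_2\le T$, and (b) $\sup_{f\in F}\|\tau_h f-f\|_{L^p(0,T-h;B)}\to0$ as $h\to0$, where $\tau_h f(t)=f(t+h)$. Property (a) is immediate, since $\int_{t_1}^{t_2}f$ is bounded in $X$ by H\"older and hence precompact in $B$. For (b), the bound on $\partial_t f$ in $L^1(0,T;Y)$ gives $\|f(t+h)-f(t)\|_Y\le\int_t^{t+h}\|\partial_t f\|_Y\,ds$, whence $\int_0^{T-h}\|\tau_h f-f\|_Y\,dt\le h\|\partial_t f\|_{L^1(0,T;Y)}$; since $W^{1,1}(0,T;Y)\hookrightarrow C([0,T];Y)$ furnishes a uniform $L^\infty(0,T;Y)$ bound, interpolation upgrades this to $\|\tau_h f-f\|_{L^p(0,T-h;Y)}\to0$ uniformly in $f$. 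Applying the Ehrling inequality termwise,
\[
\|\tau_h f-f\|_{L^p(0,T-h;B)}\le\eta\|\tau_h f-f\|_{L^p(0,T-h;X)}+C_\eta\|\tau_h f-f\|_{L^p(0,T-h;Y)},
\]
where the first term is $\le2\eta\|f\|_{L^p(0,T;X)}$, uniformly small for small $\eta$, and the second tends to zero; letting $\eta\to0$ yields (b).

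For part (ii) I would argue by Arzel\`a--Ascoli for $B$-valued functions. The bound $\partial_t f\in L^r(0,T;Y)$ with $r>1$ gives uniform H\"older continuity into $Y$, namely $\|f(t)-f(s)\|_Y\le\|\partial_t f\|_{L^r(0,T;Y)}|t-s|^{1-1/r}$, hence equicontinuity in $Y$; the Ehrling inequality together with the uniform $L^\infty(0,T;X)$ bound converts this into equicontinuity in $B$. Pointwise relative compactness in $B$ follows because $\{f(t):f\in F\}$ lies in a bounded subset of $X$, hence in a compact subset of $B$; that $f(t)$ is so controlled for \emph{every} $t$ is seen by approximating $t$ by Lebesgue points and using $X\hookrightarrow\hookrightarrow B$ together with continuity into $Y$. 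Arzel\`a--Ascoli then yields relative compactness in $C([0,T];B)$.

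The main obstacle is the handling of the weak $L^1(0,T;Y)$ bound in part (i): the $L^1$-in-time modulus of continuity alone does not control the $L^p$-in-time translates required by criterion (b), so the crux is to combine it with the $L^\infty(0,T;Y)$ bound extracted from $W^{1,1}(0,T;Y)\hookrightarrow C([0,T];Y)$ through the interpolation $\|g\|_{L^p(0,T-h;Y)}^p\le\|g\|_{L^\infty(0,T-h;Y)}^{p-1}\|g\|_{L^1(0,T-h;Y)}$ with $g=\tau_h f-f$. A secondary subtlety is justifying the compactness characterization of $L^p(0,T;B)$ itself, which is the genuine content of Simon's analysis and which I would cite rather than reprove.
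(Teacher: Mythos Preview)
The paper does not supply a proof of this lemma at all: it is stated with a direct citation to Simon's Corollary~4 and is used as a black box. Your sketch is a correct outline of the standard argument (Ehrling's inequality plus the Fr\'echet--Kolmogorov--Riesz translate criterion for part~(i), and Arzel\`a--Ascoli for part~(ii)), which is essentially Simon's own strategy; there is nothing in the paper to compare it against.
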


Now we provide a lower bound, in dependent of $\varepsilon$, for the existence time and establish the uniform, in $\varepsilon$, estimates for the solution $(v_\varepsilon, T_\varepsilon)$ obtained in Proposition \ref{lem2.1}. We have the following:

\begin{proposition}\label{lem2.3}
The local strong solution $(v_\varepsilon,T_\varepsilon)$ given by Proposition \ref{lem2.1} can be established on the interval $(0,t_0)$, such that
$$
\sup_{0\leq t\leq t_0}(\|v_\varepsilon\|_{H^2}^2+\|T_\varepsilon\|_{H^2}^2)+\int_0^{t_0}
(\|\nabla v_\varepsilon\|_{H^2}^2
+\|\nabla_HT_\varepsilon\|_{H^2}^2+\varepsilon\|\partial_zT_\varepsilon\|_{H^2}^2)dt\leq C
$$
and
$$
\int_0^{t_0}(\|\partial_tv_\varepsilon\|_{H^1}^2+\|\partial_tT_\varepsilon\|_{H^1}^2)dt
\leq C,
$$
where $t_0$ and $C$ are two positive constants independent of $\varepsilon$.
\end{proposition}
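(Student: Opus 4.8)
The plan is to derive the stated uniform-in-$\varepsilon$ bounds via a hierarchy of energy estimates for the regularized system \eqref{2.1}--\eqref{2.3}, organized precisely as the introduction foreshadows: first the basic $L^2$ and $L^6$ estimates on $v$, then vertical derivatives, then the remaining second-order derivatives, closing everything with a Gronwall/continuation argument that produces a time $t_0$ independent of $\varepsilon$. Throughout, the key structural device is the splitting $\phi=\bar\phi+\tilde\phi$ together with Lemma \ref{lem2.2}, which is exactly tailored to bound the ``bad'' term $\left(\int_{-h}^z\nabla_H\cdot v\,d\xi\right)\partial_z(\cdot)$; whenever the coefficient $\int_{-h}^z\nabla_H\cdot v\,d\xi$ appears, I would write $\nabla_H\cdot v=\nabla_H\cdot\bar v+\nabla_H\cdot\tilde v=\nabla_H\cdot\tilde v$ by \eqref{2.2} and then apply Lemma \ref{lem2.2} to trade a factor of $\|\nabla_H(\cdot)\|_2^{1/2}$ against the dissipation.

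\textbf{Step 1 (basic and $L^6$ estimates).} Multiply \eqref{2.1} by $v$ and \eqref{2.3} by $T$ and integrate; the pressure term drops by \eqref{2.2} and the Coriolis term is skew-symmetric, giving control of $\|v\|_2,\|T\|_2$ and $\int_0^t(\|\nabla v\|_2^2+\|\nabla_H T\|_2^2+\varepsilon\|\partial_z T\|_2^2)$ on any finite interval (the $\frac1h w$ forcing in \eqref{2.3} is handled by $w=-\int_{-h}^z\nabla_H\cdot v$ and Cauchy--Schwarz, absorbed by $\|\nabla v\|_2^2$). Next, the crucial $L^6$ bound on $v$: multiply \eqref{2.1} by $|v|^4v$, integrate, and use the argument of Cao--Titi \cite{CAOTITI2,CAOTITI3}; the term with $w\partial_z v$ is integrated by parts in $z$ using $\partial_z w=-\nabla_H\cdot v$, and the pressure gradient of $\int_{-h}^z T\,d\xi$ is controlled by $\|T\|_2$ and the $L^6$ norm. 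This yields $\sup_{[0,t_0]}\|v\|_6$ and $\int_0^{t_0}\||v|^2\nabla v\|_2^2$ bounded, uniformly in $\varepsilon$, on a time interval that depends only on the data.

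\textbf{Step 2 (vertical derivatives of $v$, then $T$).} Differentiate \eqref{2.1} in $z$ and test with $\partial_z^2 v$ (equivalently test the $\partial_z$-equation with $-\partial_z v$ after an integration by parts); note $\partial_z$ of the pressure term is $-\partial_z\nabla_H\int_{-h}^z T=-\nabla_H T$, a genuine gain. The dangerous terms are $\partial_z\!\left[(\int_{-h}^z\nabla_H\cdot v)\,\partial_z v\right]$ and $(v\cdot\nabla_H)\partial_z v$; for the former the $z$-derivative hitting the integral gives $(\nabla_H\cdot v)\partial_z v$, and both pieces are estimated by Lemma \ref{lem2.2} (with the $\|\nabla_H\cdot\|_2^{1/2}$ factor absorbed by $\|\nabla_H\partial_z v\|_2$) and the $L^6$ bound on $v$ from Step 1. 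This controls $\sup\|\partial_z v\|_2^2+\int_0^{t_0}\|\nabla\partial_z v\|_2^2$. Using this, apply $\partial_z$ to \eqref{2.3} and test with $-\partial_z T$ (using oddness of $T$ so $\partial_z T$ has the right boundary behavior): the vertical-diffusion term contributes $+\varepsilon\|\partial_z^2 T\|_2^2\ge0$, which only helps and is kept with the correct sign; the convective and $w$-terms are again handled by Lemma \ref{lem2.2} plus the already-controlled $\|\partial_z v\|_{H^1}$. Then $\partial_z^2 v$: differentiate \eqref{2.1} twice in $z$, test with $\partial_z^2 v$; the worst term $\partial_z^2\!\left[(\int\nabla_H\cdot v)\partial_z v\right]$ expands to $(\nabla_H\cdot v)\partial_z^2 v+(\partial_z\nabla_H\cdot v)\partial_z v+(\int\nabla_H\cdot v)\partial_z^3 v$, each estimated with Lemma \ref{lem2.2} and interpolation, the top-order piece absorbed into $\int\|\nabla\partial_z^2 v\|_2^2$.

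\textbf{Step 3 (horizontal second-order derivatives and closing).} With all $z$-derivatives up to second order controlled, estimate $\nabla_H\partial_z v$ and then $\nabla_H^2 v,\ \nabla_H^2 T$ by differentiating the equations in the horizontal variables and testing appropriately; here the convective terms are milder, and the terms involving $\int\nabla_H\cdot v$ are no longer ``stronger nonlinearity'' than $(v\cdot\nabla_H)v$ because the troublesome $\partial_z$-derivative is now paired with already-bounded quantities. Summing all the estimates gives a differential inequality $\frac{d}{dt}\mathcal E(t)\le C(\mathcal E+\mathcal E^p)$ with $\mathcal E(t):=\|v\|_{H^2}^2+\|T\|_{H^2}^2$ and constants independent of $\varepsilon$; choosing $t_0$ depending only on $\mathcal E(0)$ and the universal constants, Gronwall gives the first displayed bound. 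The bound on $\int_0^{t_0}(\|\partial_t v\|_{H^1}^2+\|\partial_t T\|_{H^1}^2)$ then follows by reading $\partial_t v,\ \partial_t T$ off equations \eqref{2.1},\eqref{2.3}: every term on the right is already controlled in $L^2(0,t_0;H^1)$ by the bounds just obtained (for $\partial_t T$ in $H^1$ one needs $\nabla_H T\in L^2(0,t_0;H^2)$ and $v\in L^2(0,t_0;H^3)$, both in hand). Finally, a standard continuation argument shows these a priori bounds prevent blow-up before $t_0$, so the solution of Proposition \ref{lem2.1} indeed extends to $(0,t_0)$.

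\textbf{Main obstacle.} The crux is Step 2: closing the estimates for $\partial_z v$ and $\partial_z^2 v$ against the term $\left(\int_{-h}^z\nabla_H\cdot v\,d\xi\right)\partial_z v$ and its $z$-derivatives. The naive bound loses a full derivative; the whole point is that Lemma \ref{lem2.2} lets one split the loss into a harmless $L^2$ factor and a ``half-derivative'' factor $\|\nabla_H(\cdot)\|_2^{1/2}$ that, after Young's inequality with small parameter, is absorbed by the horizontal part of the dissipation $\|\nabla_H\partial_z v\|_2^2$ (and $\varepsilon\|\partial_z^2 T\|_2^2$ is \emph{not} available uniformly in $\varepsilon$, so it must never be relied upon — only kept with its good sign). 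Getting the interpolation exponents to balance, and making sure the $L^6$ estimate from Step 1 supplies exactly the right low-order norm, is the delicate bookkeeping that makes the argument work.
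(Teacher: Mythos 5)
Your outline would work, but it takes a genuinely different and much heavier route than the paper. The paper's proof of Proposition \ref{lem2.3} is a single, one-shot energy estimate: it multiplies (\ref{2.1}), (\ref{2.3}) by $v_\varepsilon$, $T_\varepsilon$ and, simultaneously, applies $\nabla$ to both equations and tests with $-\nabla\Delta v_\varepsilon$, $-\nabla\Delta T_\varepsilon$; after Lemma \ref{lem2.2}, H\"older, Sobolev and Young, \emph{all} nonlinear terms are dumped into the crude bound $C\bigl(1+\|v_\varepsilon\|_{H^2}^6+\|T_\varepsilon\|_{H^2}^6\bigr)$, and the uniform $t_0$ comes from the nonlinear ODE comparison $F'\leq C_1F^3$, giving $F(t)\leq(1-2C_1t)^{-1/2}$ and hence $t_\varepsilon^*>\tfrac{1}{4C_1}$. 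No $L^6$ estimate on $v$, no maximum principle for $T$, and no ordering of vertical before horizontal derivatives is used here --- that entire hierarchy is reserved for the \emph{global} a priori estimates of Section \ref{sec3} (Propositions \ref{lem3.1}--\ref{lem3.3}), where one cannot afford a superlinear closure. Your plan essentially imports that global machinery into the local proposition: it is not wrong, but it is overkill, and as sketched it would need extra ingredients the local statement does not require (e.g.\ to run your Step 2 in the claimed linear-in-energy form you need the $L^6$ bound on $\partial_z v$ and the $H^1(M)$ estimates on $\bar v$, exactly as in Proposition \ref{lem3.1}); alternatively one must accept a superlinear right-hand side, at which point the hierarchy buys nothing over the paper's direct cubic closure. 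Two points you get exactly right and which match the paper: the $\varepsilon$-dissipation terms are only ever kept with their good sign and never used for absorption, and the $\partial_t$-bounds are read off the equations once the spatial bounds are in hand (the paper delegates this to the argument of \cite{CAOLITITI}).
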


\begin{proof}
Suppose $(0,t_\varepsilon^*)$ is the maximal interval of existence of the local strong solution $(v_\varepsilon, T_\varepsilon)$. We are going to show that $t_\varepsilon^*>t_0$,
for some positive number $t_0$ independent of $\varepsilon$.

We focus in our analysis on the interval $(0,t_\varepsilon^*)$. Multiplying (\ref{2.1}) by $v_\varepsilon$ and (\ref{2.3}) by $T_\varepsilon$, respectively, and summing the resulting equations up, then it follows from integrating by parts and using (\ref{2.2}) that
\begin{align*}
&\frac{1}{2}\frac{d}{dt}\int_\Omega(|v_\varepsilon|^2+|T_\varepsilon|^2)dxdydz+\int_\Omega(|
\nabla v_\varepsilon|^2+|\nabla_HT_\varepsilon|^2+\varepsilon|\partial_z T|^2)dxdydz\\
=&\int_\Omega\bigg[\nabla_H\left(\int_{-h}^zT_\varepsilon
d\xi\right)v_\varepsilon+\frac{1}{h}
\left(\int_{-h}^z\nabla_H\cdot v_\varepsilon d\xi\right)T_\varepsilon\bigg]dxdydz.
\end{align*}
Applying the operator $\nabla$ to equations (\ref{2.1}) and (\ref{2.3}), multiplying the
resulting equations by $-\nabla\Delta v_\varepsilon$ and $-\nabla\Delta T_\varepsilon$,
respectively, summing these equalities up and integrating over $\Omega$, it follows from
integrating by parts and Lemma \ref{lem2.0} that
\begin{align*}
&\frac{1}{2}\frac{d}{dt}\int_\Omega(|\Delta v_\varepsilon|^2
+|\Delta T_\varepsilon|^2)dxdydz+\int_\Omega(|\nabla\Delta v_\varepsilon|^2
+|\nabla_H\Delta T_\varepsilon|^2+\varepsilon|\partial_z\Delta T_\varepsilon|^2)dxdydz\\
=&\int_\Omega\bigg\{\nabla\bigg[(v_\varepsilon
\cdot\nabla_H)v_\varepsilon-\left(\int_{-h}^z\nabla_H\cdot v_\varepsilon d\xi\right)\partial_zv_\varepsilon-\nabla_H\left(\int_{-h}^zT_\varepsilon d\xi\right)\bigg]:\nabla\Delta v_\varepsilon\\
&-\Delta\bigg[v_\varepsilon\cdot\nabla_HT_\varepsilon-\left(\int_{-h}^z\nabla_H\cdot v_\varepsilon d\xi\right)\left(\partial_zT_\varepsilon+\frac{1}{h}\right)\bigg]\Delta T_\varepsilon\bigg\}dxdydz.
\end{align*}
Summing the above two equalities up, then it follows from Lemma \ref{lem2.2}, the H\"older, Young, Sobolev and Poincar\'e inequalities that
\begin{align*}
&\frac{1}{2}\frac{d}{dt}\int_\Omega(|v_\varepsilon|^2+|\Delta v_\varepsilon|^2+|T_\varepsilon|^2
+|\Delta T_\varepsilon|^2)dxdydz\\
&+\int_\Omega(|\nabla v_\varepsilon|^2+|\nabla\Delta v_\varepsilon|^2+|\nabla_HT_\varepsilon|^2+|\nabla_H\Delta T_\varepsilon|^2+\varepsilon|\partial_zT_\varepsilon|^2+\varepsilon|\partial_z\Delta T_\varepsilon|^2)dxdydz\\
=&\int_\Omega\bigg\{\nabla_H\left(\int_{-h}^zT_\varepsilon d\xi\right)v_\varepsilon+\frac{1}{h}
\left(\int_{-h}^z\nabla_H\cdot v_\varepsilon d\xi\right)T_\varepsilon\\
&+\nabla\bigg[(v_\varepsilon
\cdot\nabla_H)v_\varepsilon-\left(\int_{-h}^z\nabla_H\cdot v_\varepsilon d\xi\right)\partial_zv_\varepsilon-\nabla_H\left(\int_{-h}^zT_\varepsilon d\xi\right)\bigg]:\nabla\Delta v_\varepsilon\\
&-\Delta\bigg[v_\varepsilon\cdot\nabla_HT_\varepsilon-\left(\int_{-h}^z\nabla_H\cdot v_\varepsilon d\xi\right)\left(\partial_zT_\varepsilon+\frac{1}{h}\right)\bigg]\Delta T_\varepsilon\bigg\}dxdydz\\
=&\int_\Omega\bigg\{\left(\int_{-h}^z\nabla_HT_\varepsilon d\xi\right)v_\varepsilon
+\frac{1}{h}\left(\int_{-h}^z\nabla_H\cdot v_\varepsilon d\xi\right)T_\varepsilon+\bigg[\nabla v_\varepsilon\cdot\nabla_Hv_\varepsilon\\
&+v_\varepsilon\cdot\nabla_H\nabla v_\varepsilon-\nabla\left(\int_{-h}^z\nabla_H\cdot v_\varepsilon d\xi\right)\partial_zv_\varepsilon-\left(\int_{-h}^z\nabla_H\cdot v_\varepsilon d\xi\right)\nabla\partial_zv_\varepsilon\\
&-\nabla_H\nabla\left(\int_{-h}^zT_\varepsilon d\xi\right)\bigg]:\nabla\Delta v_\varepsilon
-\bigg[\Delta v_\varepsilon\cdot\nabla_HT_\varepsilon+2\nabla v_\varepsilon\cdot\nabla_H\nabla T_\varepsilon\\
&-\left(\int_{-h}^z\nabla_H\cdot\Delta v_\varepsilon d\xi\right)(\partial_zT_\varepsilon+\frac{1}{h})-2\left(\int_{-h}^z\nabla\nabla_H\cdot v_\varepsilon d\xi\right)\nabla\partial_zT_\varepsilon\bigg]\Delta T_\varepsilon\bigg\}dxdydz\\
\leq&C\int_\Omega\bigg\{\left(\int_{-h}^h|\nabla_HT_\varepsilon|d\xi\right)|v_\varepsilon|+\left(
\int_{-h}^h|\nabla v_\varepsilon|d\xi\right)|T_\varepsilon|+\bigg[|\nabla v_\varepsilon|^2+|v_\varepsilon||\nabla^2v_\varepsilon|\\
&+\left(\int_{-h}^h|\nabla^2v_\varepsilon|d\xi\right)|\partial_zv_\varepsilon|+\left(\int_{-h}^h
|\nabla v_\varepsilon|d\xi\right)|\nabla^2v_\varepsilon|+\left(\int_{-h}^h|\nabla^2T_\varepsilon|
d\xi\right)\bigg]|\nabla\Delta v_\varepsilon|\\
&+\bigg[|\Delta v_\varepsilon||\nabla_HT_\varepsilon|+|\nabla v_\varepsilon||\nabla_H\nabla T_\varepsilon|+\left(\int_{-h}^h|\nabla_H\Delta v_\varepsilon|d\xi\right)\bigg]|\Delta T_\varepsilon|\bigg\}dxdydz\\
&+C\int_\Omega\int_M\bigg[\left(\int_{-h}^h|\nabla\Delta v_\varepsilon|d\xi\right)\left(\int_{-h}^h|\partial_zT_\varepsilon||\Delta T_\varepsilon|d\xi\right)\\
&+\left(\int_{-h}^h|\nabla^2v_\varepsilon|d\xi\right)\left(\int_{-h}^h|\nabla^2T_\varepsilon|^2d\xi
\right)\bigg]dxdydz\\
\leq&C\Big[\|\nabla_HT_\varepsilon\|_2\|v_\varepsilon\|_2+\|\nabla v_\varepsilon\|_2\|T_\varepsilon\|_2+(\|\nabla v_\varepsilon\|_4^2+\|v_\varepsilon\|_\infty\|\nabla^2v_\varepsilon\|_2\\
&+\|\nabla^2v_\varepsilon\|_3\|\nabla v_\varepsilon\|_6+\|\nabla^2T_\varepsilon\|_2)\|\nabla\Delta v_\varepsilon\|_2+(\|\nabla^2v_\varepsilon\|_3\|\nabla_HT_\varepsilon\|_6\\
&+\|\nabla v_\varepsilon\|_3\|\nabla_H\nabla T_\varepsilon\|_6+\|\nabla\Delta v_\varepsilon\|_2)
\|\Delta T_\varepsilon\|_2\Big]+C\|\nabla\Delta v_\varepsilon\|_2\|\partial_zT_\varepsilon\|_2^{1/2}\\
&\times(\|\partial_zT_\varepsilon\|_2^{1/2}+\|\nabla_H\partial_zT_\varepsilon\|_2^{1/2})\|\Delta T_\varepsilon\|_2^{1/2}(\|\Delta T_\varepsilon\|_2^{1/2}+\|\nabla_H\Delta T_\varepsilon\|_2^{1/2})\\
&+C\|\nabla^2v_\varepsilon\|_2\|\nabla^2T_\varepsilon\|_2(\|\nabla^2T_\varepsilon
\|_2+\|\nabla_H\nabla^2T_\varepsilon\|_2)\\
\leq&C\Big[\|v_\varepsilon\|_{H^2}^2+\|T_\varepsilon\|_{H^2}^2+
(\|v_\varepsilon\|_{H^2}^2+\|\Delta v_\varepsilon\|_2^{1/2}\|\nabla\Delta v_\varepsilon\|_2^{1/2}\|v_\varepsilon\|_{H^2}+\|T_\varepsilon\|_{H^2}^2)\|\nabla\Delta v_\varepsilon\|_2\\
&+(\|\Delta v_\varepsilon\|_2^{1/2}\|\nabla\Delta v_\varepsilon\|_2^{1/2}\|T_\varepsilon\|_{H^2}
+\|v_\varepsilon\|_{H^2}\|\nabla_H\Delta T_\varepsilon\|_2+\|\nabla\Delta v_\varepsilon\|_2)\\
&\times\|\Delta T_\varepsilon\|_2\Big]+C\|\nabla\Delta v_\varepsilon\|_2(\|T_\varepsilon\|_{H^2}^2+\|T_\varepsilon\|_{H^2}^{3/2}\|\nabla_H\Delta T_\varepsilon\|_2^{1/2})\\
&+C\|v_\varepsilon\|_{H^2}(\|T_\varepsilon\|_{H^2}^2+\|T_\varepsilon\|_{H^2}\|\nabla_H\Delta T_\varepsilon\|_2)\\
\leq&\frac{1}{2}(\|\nabla\Delta v_\varepsilon\|_2^2+\|\nabla_H\Delta T_\varepsilon\|_2^2)+C(1+\|v_\varepsilon\|_{H^2}^6+\|T_\varepsilon\|_{H^2}^6)
\end{align*}
from which, we obtain, for any $t\in(0,t_\varepsilon^*)$,
\begin{align*}
&\sup_{0\leq s\leq t}(\|v_\varepsilon\|_{H^2}^2+\|T_\varepsilon\|_{H^2}^2)+\int_0^t
(\|\nabla v_\varepsilon\|_{H^2}^2+\|\nabla_HT_\varepsilon\|_{H^2}^2+\varepsilon\|\partial_z
T_\varepsilon\|_{H^2}^2)ds\\
\leq&CC_0+C\int_0^t(1+\|v_\varepsilon\|_{H^2}^2+\|T_\varepsilon\|_{H^2}^2)^3ds,
\end{align*}
where $C_0=\|v_0\|_{H^2}^2+\|T_0\|_{H^2}^2+1$.

Setting
$$
f(t)=\sup_{0\leq s\leq t}(\|v_\varepsilon\|_{H^2}^2+\|T_\varepsilon\|_{H^2}^2+1)+\int_0^t
(\|\nabla v_\varepsilon\|_{H^2}^2+\|\nabla_HT_\varepsilon\|_{H^2}^2+\varepsilon\|\partial_z
T_\varepsilon\|_{H^2}^2)ds
$$
for $t\in[0,t_\varepsilon^*)$. Then one has
$$
f(t)\leq CC_0+C\int_0^t(f(s))^3ds,\quad t\in[0,t_\varepsilon^*).
$$
Set $F(t)=\int_0^t(f(t)^3)ds+1$, then we have
$$
F'(t)=(f(t))^3\leq C_1(F(t))^3,\quad\forall t\in[0,t_\varepsilon^*),
$$
where $C_1$ is a positive constant depending only on $h$ and
$(v_0,T_0)$. This inequality implies
$$
F(t)\leq\frac{1}{\sqrt{1-2C_1t}},\quad\forall t\in[0,t_\varepsilon^*)\cap[0,\frac{1}{2C_1}),
$$
and thus
\begin{align*}
&\sup_{0\leq s\leq t}(\|v_\varepsilon\|_{H^2}^2+\|T_\varepsilon\|_{H^2}^2)+\int_0^t
(\|\nabla v_\varepsilon\|_{H^2}^2+\|\nabla_HT_\varepsilon\|_{H^2}^2+\varepsilon\|\partial_z
T_\varepsilon\|_{H^2}^2)ds\\
\leq&CC_0+CF(t)\leq CC_0+\frac{C}{\sqrt{1-2C_1t}}\leq C(C_0+\sqrt 2),
\end{align*}
for any $t\in[0,t_\varepsilon^*)\cap[0,\frac{1}{4C_1}].$ Recalling that $t_\varepsilon^*$ is the
maximal existence time, the above inequality implies that $t_\varepsilon^*>\frac{1}{4C_1}$. Thus
we can take $t_0=\frac{1}{4C_1}$.

Thanks to the estimates we have just proved, one can use the same argument as in the last paragraph of the proof of Proposition 3.1 in \cite{CAOLITITI} to obtain the estimates on $\partial_tv_\varepsilon$ and $\partial_t T_\varepsilon$, and thus we omit the details here. This completes the proof.
\end{proof}

Now we can prove the local well-posedness of strong solutions to system (\ref{1.1})--(\ref{1.7}), or equivalently system (\ref{1.8})--(\ref{1.13}).

\begin{proposition} \label{prop2.1}
Let $v_0\in H^2(\Omega)$ and $T_0\in H^2(\Omega)$ be two periodic functions, such that they are even and odd in $z$, respectively. Then system (\ref{1.8})--(\ref{1.13}) has a unique strong solution $(v,T)$ in $\Omega\times(0,t_0)$, where $t_0$ is the same positive time stated in Proposition \ref{lem2.3}. Moreover, the strong solution depends continuously on the initial data.
\end{proposition}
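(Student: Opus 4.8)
The plan is to obtain the local strong solution to the non-regularized system (\ref{1.8})--(\ref{1.13}) as the limit, as $\varepsilon \to 0^+$, of the family $(v_\varepsilon, T_\varepsilon)$ of strong solutions to the regularized system (\ref{2.1})--(\ref{2.3}) produced by Proposition \ref{lem2.1}, using the $\varepsilon$-independent bounds from Proposition \ref{lem2.3} together with the Aubin--Lions Lemma \ref{AL}. First I would record that, by Proposition \ref{lem2.3}, on the fixed interval $(0,t_0)$ the family $(v_\varepsilon, T_\varepsilon)$ is bounded in $L^\infty(0,t_0;H^2(\Omega))$, $\nabla v_\varepsilon$ is bounded in $L^2(0,t_0;H^2(\Omega))$, $\nabla_H T_\varepsilon$ is bounded in $L^2(0,t_0;H^2(\Omega))$, and $(\partial_t v_\varepsilon, \partial_t T_\varepsilon)$ is bounded in $L^2(0,t_0;H^1(\Omega))$, all uniformly in $\varepsilon$. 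Applying Lemma \ref{AL} with, say, $X=H^2(\Omega)$, $B=H^{2-\delta}(\Omega)$ (or $H^1(\Omega)$) and $Y=L^2(\Omega)$, part (i) gives relative compactness of $\{v_\varepsilon\}$ and $\{T_\varepsilon\}$ in $L^2(0,t_0;H^1(\Omega))$, and part (ii) gives relative compactness in $C([0,t_0];B)$; combined with weak-$*$ compactness in $L^\infty(0,t_0;H^2)$ and weak compactness of the gradients in $L^2(0,t_0;H^2)$ and of the time derivatives in $L^2(0,t_0;H^1)$, I can extract a subsequence converging to a limit $(v,T)$ in all these topologies.

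Next I would pass to the limit in the weak formulation of (\ref{2.1})--(\ref{2.3}). The linear terms converge by the weak convergences; the regularizing term $\varepsilon \partial_z^2 T_\varepsilon$ tends to zero since $\varepsilon\|\partial_z T_\varepsilon\|_{H^2}^2$ is bounded in $L^1(0,t_0)$ (indeed $\sqrt\varepsilon\,\partial_z^2 T_\varepsilon$ is bounded in $L^2$, so $\varepsilon\partial_z^2 T_\varepsilon \to 0$ strongly in $L^2(0,t_0;L^2)$); and the nonlinear terms — $(v_\varepsilon\cdot\nabla_H)v_\varepsilon$, $\left(\int_{-h}^z \nabla_H\cdot v_\varepsilon\,d\xi\right)\partial_z v_\varepsilon$, $v_\varepsilon\cdot\nabla_H T_\varepsilon$, and $\left(\int_{-h}^z \nabla_H\cdot v_\varepsilon\,d\xi\right)(\partial_z T_\varepsilon + \tfrac1h)$ — converge because one factor converges strongly in $L^2(0,t_0;H^1)$ while the other converges weakly in an appropriate $L^2$-based space, which suffices after a Hölder/Sobolev argument (the strong $H^1$ convergence in space controls products since $H^1(\Omega)\hookrightarrow L^6$). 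The limit $(v,T)$ therefore satisfies (\ref{1.8})--(\ref{1.10}) a.e., and the regularity class in Definition \ref{def1.1} follows from the uniform bounds (lower semicontinuity of norms under weak limits) plus the fact that $\partial_t v, \partial_t T \in L^2(0,t_0;H^1)$ combined with $v,T\in L^2(0,t_0;H^3)$-type bounds gives, via Aubin--Lions or a standard interpolation, $v,T\in C([0,t_0];H^1)$; the initial condition (\ref{1.13}) passes to the limit in $C([0,t_0];B)$. I should also check that evenness/oddness in $z$ and spatial periodicity are preserved under the limit, which is immediate since these are closed linear constraints.

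Finally, uniqueness and continuous dependence: given two strong solutions $(v_1,T_1)$ and $(v_2,T_2)$ with data $(v_{0,1},T_{0,1})$, $(v_{0,2},T_{0,2})$, I would form the difference $(V,\Theta)=(v_1-v_2, T_1-T_2)$, which solves the system obtained by subtracting, and derive an energy estimate: multiply the $V$-equation by $V$ and the $\Theta$-equation by $\Theta$, integrate over $\Omega$, and estimate the difference of the nonlinear terms. The key tool here is Lemma \ref{lem2.2}, which is precisely designed to control the "bad" term $\left(\int_{-h}^z \nabla_H\cdot V\,d\xi\right)\partial_z v_i$ and its analogue in the temperature equation by expressions involving $\|V\|_2$, $\|\nabla_H V\|_2$, $\|\Theta\|_2$, $\|\nabla_H\Theta\|_2$ and the $H^2$-norms of $(v_i,T_i)$; absorbing the gradient terms into the dissipation $\|\nabla V\|_2^2 + \|\nabla_H\Theta\|_2^2$ on the left and using the uniform-in-time $H^2$ bounds on the solutions gives
\[
\frac{d}{dt}\big(\|V\|_2^2+\|\Theta\|_2^2\big) \le C(t)\big(\|V\|_2^2+\|\Theta\|_2^2\big),
\]
with $C(t)\in L^1(0,t_0)$, whence Grönwall yields $\|V(t)\|_2^2+\|\Theta(t)\|_2^2 \le e^{\int_0^t C}\big(\|V(0)\|_2^2+\|\Theta(0)\|_2^2\big)$. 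Taking equal data gives uniqueness; keeping the data difference gives Lipschitz continuous dependence in the $L^\infty(0,t_0;L^2)$ norm (and, by interpolation with the uniform $H^2$ bound, continuous dependence in weaker-than-$H^2$ norms). I expect the main obstacle to be the delicate handling of the strongly nonlinear term $\left(\int_{-h}^z \nabla_H\cdot v\,d\xi\right)\partial_z v$ — both in verifying that it passes to the limit (it requires the strong convergence of $\nabla v_\varepsilon$ in $L^2(0,t_0;H^1)$, not merely $L^2(0,t_0;L^2)$) and in the uniqueness estimate, where the anisotropic Lemma \ref{lem2.2} must be applied with the right choice of which factor carries the horizontal-gradient; the $\varepsilon\to 0$ limit of $\varepsilon\partial_z^2 T_\varepsilon$ and the bookkeeping of regularities are comparatively routine.
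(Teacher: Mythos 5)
Your proposal is correct and follows essentially the same route as the paper: vanishing-viscosity limit of the regularized solutions from Proposition \ref{lem2.1} using the uniform bounds of Proposition \ref{lem2.3} and the Aubin--Lions lemma, with the $\varepsilon\partial_z^2T_\varepsilon$ term killed by the $\sqrt{\varepsilon}$-weighted bound, followed by an $L^2$ energy estimate on the difference of two solutions via Lemma \ref{lem2.2} and Gr\"onwall. The only cosmetic difference is that the paper isolates the uniqueness/continuous-dependence step as a separate statement (Proposition \ref{propcont}), proved under weaker regularity hypotheses than $H^2$, but the estimate itself is the one you describe.
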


\begin{proof}
By Proposition \ref{lem2.1} and Proposition \ref{lem2.3}, for any given $\varepsilon>0$, system (\ref{2.1})--(\ref{2.3}), subject to the boundary and initial conditions (\ref{1.11})--(\ref{1.13}), has a unique strong solution $(v_\varepsilon, T_\varepsilon)$ in $\Omega\times(0,t_0)$ such that
$$
\sup_{0\leq t\leq t_0}(\|v_\varepsilon\|_{H^2}^2+\|T_\varepsilon\|_{H^2}^2)+\int_0^{t_0}(\|\nabla v_\varepsilon\|_{H^2}^2+\|\nabla_H
T_\varepsilon\|_{H^2}^2+\varepsilon\|\partial_zT_\varepsilon\|_{H^2}^2)dt\leq C
$$
and
$$
\int_0^{t_0}(\|\partial_tv_\varepsilon\|_{H^1}^2+\|\partial_tT_\varepsilon\|_{H^1}^2)dt
\leq C,
$$
where $C$ is a constant independent of $\varepsilon$. On account of these estimates and applying Lemma \ref{AL}, there is a subsequence, still denoted by $\{(v_\varepsilon,T_\varepsilon)\}$, and $(v,T)$, such that
\begin{eqnarray*}
  &(v_\varepsilon,T_\varepsilon)\rightarrow (v,T),\quad\mbox{in }C([0,t_0];H^1(\Omega)),\\
  &(\nabla v_\varepsilon,\nabla_HT_\varepsilon)\rightarrow(\nabla v,\nabla_HT),\quad\mbox{in }L^2(0,t_0;H^1(\Omega)),\\
  &(v_\varepsilon,T_\varepsilon){\overset{*}{\rightharpoonup}}(v,T),\quad\mbox{in }L^\infty(0,t_0;H^2(\Omega)),\\
  &(\nabla v_\varepsilon,\nabla_HT_\varepsilon)\rightharpoonup (\nabla v,\nabla_HT),\quad\mbox{in }L^2(0,t_0;H^2(\Omega)),\\
  &(\partial_tv_\varepsilon,\partial_tT_\varepsilon)\rightharpoonup(\partial_tv,\partial_tT)
,\quad\mbox{in }L^2(\Omega\times(0,t_0)),
\end{eqnarray*}
where $\rightharpoonup$ and ${\overset{*}{\rightharpoonup}}$ are the weak and weak-$*$ convergence, respectively. Thanks to these convergence, one can easily show that $(v,T)$ is a strong solution to system (\ref{1.8})--(\ref{1.13}), or equivalently to system (\ref{1.1})--(\ref{1.7}). The continuous dependence on the initial data, in particular the uniqueness, are straightforward consequence  of Proposition \ref{propcont} (see Corollary \ref{CORO}, below).
\end{proof}

For the continuous dependence on the initial data, the solutions are not required to have as high regularities as stated in Definition \ref{def1.1}. In fact, we have the following:

\begin{proposition} \label{propcont}  Let $(v_1,T_1)$ and $(v_2,T_2)$ be two spatially periodic functions, satisfying the following regularity properties
\begin{eqnarray*}
  &(v_i,T_i)\in L^\infty(0,t_0; H^1(\Omega))\cap C([0,t_0];L^2(\Omega)),\\
  &(\partial_tv_i,\partial_tT_i,\delta\partial_z^2v_i)\in L^2(\Omega\times(0,t_0)),\quad(\nabla_Hv_i,\nabla_HT_i)\in L^2(0,t_0;H^1(\Omega)),
\end{eqnarray*}
$i=1,2$, where $\delta\geq0$ is a given constant. Set
\begin{equation}\label{phi}
\begin{split}
  \phi(t)=&1+\|v_2(t)\|_2^4+\|\partial_zv_2(t)\|_2^4+\|v_2(t)\|_2^2\|\nabla_Hv_2(t)\|_2^2\\
  &+\|\partial_zv_2(t)\|_2^2
  \|\nabla_H\partial_zv_2(t)\|_2^2+\|T_2(t)\|_2^4+\|\partial_zT_2(t)\|_2^4\\
  &+\|T_2(t)\|_2^2\|\nabla_HT_2(t)\|_2^2+\|\partial_zT_2(t)\|_2^2
  \|\nabla_H\partial_zT_2(t)\|_2^2,
\end{split}
\end{equation}
for any $t\in(0,t_0)$.
Suppose that both $(v_1, T_1)$ and $(v_2,T_2)$ satisfy the following system
\begin{eqnarray}
&\partial_tv-\Delta_H v-\delta\partial_z^2v+(v\cdot\nabla_H)v-\left(\int_{-h}^z\nabla_H\cdot v(x,y,\xi,t)d\xi\right)\partial_zv\nonumber\\
&+f_0k\times v+\nabla_H\left(p_s(x,y,t)-\int_{-h}^zT(x,y,\xi,t)d\xi\right)=0,\label{EQ-1}\\
&\nabla_H\cdot\bar v=0,\label{EQ-2}\\
&\partial_tT-\Delta_HT+v\cdot\nabla_HT-\left(\int_{-h}^z\nabla_H\cdot v(x,y,\xi,t)d\xi\right)\left(\partial_zT+\frac{1}{h}\right)=0,\label{EQ-3}
\end{eqnarray}
in $\Omega\times(0,t_0)$.

Setting $(v,T)=(v_1-v_2,T_1-T_2)$, then it follows that
\begin{align*}
\sup_{0\leq s\leq t}&(\|v\|_2^2+\|T\|_2^2)+\int_0^t(\|\nabla v\|_2^2+\|\nabla_HT\|_2^2+
\delta\|\partial_zv\|_2^2)ds\\
\leq& Ce^{C\int_0^t\phi(s)ds}(\|(v_1)_0-(v_2)_0\|_2^2+\|(T_1)_0-(T_2)_0\|_2^2),
\end{align*}
for any $t\in(0,t_0)$, where $((v_i)_0,(T_i)_0)$, $i=1,2$, are the initial values of $(v_i,T_i)$.
\end{proposition}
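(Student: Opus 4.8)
The plan is to derive an energy inequality for the difference $(v,T)=(v_1-v_2,T_1-T_2)$ and then close it with a Gr\"onwall argument. First I would write down the system satisfied by $(v,T)$: subtracting equation (\ref{EQ-1}) for $(v_2,T_2)$ from that for $(v_1,T_1)$ gives
\begin{align*}
&\partial_tv-\Delta_Hv-\delta\partial_z^2v+(v_1\cdot\nabla_H)v+(v\cdot\nabla_H)v_2-\left(\int_{-h}^z\nabla_H\cdot v_1\,d\xi\right)\partial_zv\\
&\quad-\left(\int_{-h}^z\nabla_H\cdot v\,d\xi\right)\partial_zv_2+f_0k\times v+\nabla_H\left(p_s^{(1)}-p_s^{(2)}-\int_{-h}^zT\,d\xi\right)=0,
\end{align*}
and similarly for the temperature difference. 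Then I would multiply the $v$-equation by $v$ and the $T$-equation by $T$, integrate over $\Omega$, integrate by parts, and use (\ref{EQ-2}) (so that $\nabla_H\cdot\bar v=0$ kills the surface-pressure term after integrating in $z$) to obtain
\[
\frac{1}{2}\frac{d}{dt}(\|v\|_2^2+\|T\|_2^2)+\|\nabla v\|_2^2+\delta\|\partial_zv\|_2^2+\|\nabla_HT\|_2^2 = \sum_j I_j,
\]
where the $I_j$ are the nonlinear commutator-type terms: $\int(v\cdot\nabla_H)v_2\cdot v$, $\int\left(\int_{-h}^z\nabla_H\cdot v\,d\xi\right)\partial_zv_2\cdot v$, the coupling term $\int\nabla_H\left(\int_{-h}^zT\,d\xi\right)\cdot v$, the analogous temperature terms $\int(v\cdot\nabla_H)T_2\,T$ and $\int\left(\int_{-h}^z\nabla_H\cdot v\,d\xi\right)(\partial_zT_2+\tfrac1h)T$; note the terms carrying $v_1$ or $T_1$ alone vanish by the divergence-free / integration-by-parts structure exactly as in the basic energy estimate, so only terms with a factor of the smoother index-$2$ solution survive.

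The key step is to bound each $I_j$ by $\varepsilon(\|\nabla v\|_2^2+\delta\|\partial_zv\|_2^2+\|\nabla_HT\|_2^2)+C\phi(t)(\|v\|_2^2+\|T\|_2^2)$, so that the dissipation on the left absorbs the gradient factors and Gr\"onwall applies with the integrating factor $e^{C\int_0^t\phi\,ds}$. The terms without vertical-integral structure, like $\int(v\cdot\nabla_H)v_2\cdot v$, are handled by H\"older plus the anisotropic Ladyzhenskaya-type inequality: $\|v\|_{L^2_{xy}L^\infty_z}\lesssim\|v\|_2^{1/2}(\|v\|_2+\|\partial_zv\|_2)^{1/2}$ wait — more precisely one splits $v_2$ into $\bar v_2+\tilde v_2$ and uses $\|\bar v_2\|_{L^4(M)}\lesssim\|\bar v_2\|_2^{1/2}\|\nabla_H\bar v_2\|_2^{1/2}$ for the barotropic part and the one-dimensional Sobolev embedding in $z$ for the baroclinic part, producing exactly the combinations $\|v_2\|_2^2\|\nabla_Hv_2\|_2^2$, $\|\partial_zv_2\|_2^2\|\nabla_H\partial_zv_2\|_2^2$, etc., that appear in the definition (\ref{phi}) of $\phi$. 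The terms carrying the vertical integral $\int_{-h}^z\nabla_H\cdot v\,d\xi$ against a factor of $v_2$ or $\partial_zv_2$ or $\partial_zT_2$ are precisely what Lemma \ref{lem2.2} is designed for: applying it with $f=\nabla_Hv$, and $g,h$ drawn from $\{v,\partial_zv_2,\ldots\}$ gives bounds of the form $\|\nabla_Hv\|_2^{1/2+?}\cdots$ — after a Young's inequality every such term is dominated by a small multiple of $\|\nabla v\|_2^2$ plus $C\phi(t)(\|v\|_2^2+\|\partial_zv\|_2^2\cdot 0 \ldots)$. The coupling term $\int\nabla_H(\int_{-h}^zT\,d\xi)\cdot v$ is the easiest: by Cauchy--Schwarz in $z$ and Young it is bounded by $C(\|T\|_2^2+\|v\|_2^2)$ with no gradient needed.

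The main obstacle will be the bookkeeping of the vertical-integral nonlinearities so that, after invoking Lemma \ref{lem2.2} and Young's inequality, the ``bad'' factors land on quantities that are integrable in time against $\phi$ rather than on an uncontrolled gradient of the index-$1$ solution. Concretely, in a term like $\int_M\left(\int_{-h}^h|\nabla_H v|\,d\xi\right)\left(\int_{-h}^h|\partial_zv_2||v|\,d\xi\right)$ one must assign the $\nabla_H$-derivative role in Lemma \ref{lem2.2} to $v$ (the difference) and never to $v_2$, keeping $\partial_zv_2$ and $\nabla_H\partial_zv_2$ — which are controlled only through $\phi$ and the $L^2(0,t_0;H^1)$ bound on $\nabla_Hv_2$ — in the slots that receive at most an $L^2_t$-integrable coefficient; the $\delta\partial_z^2v$ term is benign since $\delta\|\partial_zv\|_2^2$ sits on the left, and one only needs $\|v_2\|_{L^\infty_tH^1}$ and $\delta\|\partial_z^2v_2\|_{L^2}$, not any derivative of $v_1$. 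Once every $I_j$ is in the stated form, summing and choosing $\varepsilon$ small yields $\frac{d}{dt}(\|v\|_2^2+\|T\|_2^2)\le C\phi(t)(\|v\|_2^2+\|T\|_2^2)$ after moving the leftover dissipation to the left, and integrating the Gr\"onwall inequality from $0$ to $t$ — together with the retained $\int_0^t(\|\nabla v\|_2^2+\|\nabla_HT\|_2^2+\delta\|\partial_zv\|_2^2)ds$ on the left — gives the claimed estimate; taking the supremum over $s\in[0,t]$ is immediate since the right-hand side is monotone in $t$.
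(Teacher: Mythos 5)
Your proposal follows essentially the same route as the paper: subtract the two systems, test with $(v,T)$, treat the vertical-integral nonlinearities with Lemma \ref{lem2.2} (with the $\nabla_H$-derivative placed on the difference $v$, exactly as the paper does), handle $\int(v\cdot\nabla_H)v_2\cdot v$ via the mean/fluctuation splitting of $v_2$ (the paper's pointwise bound $|v_2(z)|\leq\frac{1}{2h}\int_{-h}^h|v_2|dz+\int_{-h}^h|\partial_zv_2|dz$ is the same device), and close with Young plus Gr\"onwall against $\phi$. The only slip is your claim that the coupling term $\int_\Omega\nabla_H\bigl(\int_{-h}^zT\,d\xi\bigr)\cdot v$ is bounded by $C(\|T\|_2^2+\|v\|_2^2)$ ``with no gradient needed'': one horizontal derivative must land somewhere, so you should integrate by parts to get $-\int_\Omega\bigl(\int_{-h}^zT\,d\xi\bigr)\nabla_H\cdot v\leq C\|T\|_2\|\nabla_Hv\|_2$ and absorb $\|\nabla_Hv\|_2^2$ into the dissipation, which is what the paper does and is fully consistent with the rest of your argument.
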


\begin{proof}
One can easily check that $(v,T)$ satisfies
\begin{eqnarray}
&\partial_tv-\Delta_H v-\delta\partial_z^2v+(v_1\cdot\nabla_H)v+(v\cdot\nabla_H)v_2\nonumber\\
&-\left(\int_{-h}^z\nabla_H\cdot v_1d\xi\right)\partial_zv-\left(\int_{-h}^z\nabla_H\cdot vd\xi\right)\partial_zv_2+f_0k\times v\nonumber\\
&+\nabla_Hp_s(x,y,t)-\nabla_H\left(\int_{-h}^zT(x,y,\xi,t)d\xi\right)=0,\label{2.5}\\
&\nabla_H\cdot\bar v=0,\label{2.5-1}\\
&\partial_tT-\Delta_HT+v_1\cdot\nabla_HT+v\cdot\nabla_H T_2-\left(\int_{-h}^z\nabla_H\cdot v_1d\xi\right)\partial_zT\nonumber\\
&-\left(\int_{-h}^z\nabla_H\cdot vd\xi\right)\left(\partial_zT_2+\frac{1}{h}\right)=0.\label{2.6}
\end{eqnarray}

Multiplying (\ref{2.5}) by $v$ and integrating over $\Omega$, then it follows from
integrating by parts and (\ref{2.5-1}) that
\begin{align}
&\frac{1}{2}\frac{d}{dt}\int_\Omega|v|^2dxdydz+\int_\Omega(|\nabla_H v|^2+\delta|\partial_zv\|_2^2)dxdydz\nonumber\\
=&\int_\Omega\bigg\{\bigg[\left(\int_{-h}^z\nabla_H\cdot vd\xi\right)\partial_zv_2-(v\cdot\nabla_H)v_2\bigg]\cdot
v
-\left(\int_{-h}^zTd\xi\right)\nabla_H\cdot v\bigg\}dxdydz. \label{2.7}
\end{align}
By Lemma \ref{lem2.2}, and using Young's inequality, we have the following estimates
\begin{align*}
&\left|\int_\Omega\left(\int_{-h}^z\nabla_H\cdot vd\xi\right)\partial_zv_2\cdot vdxdydz\right|\nonumber\\
\leq&\int_M\left(\int_{-h}^h|\nabla_Hv|dz\right)\left(\int_{-h}^h
|\partial_zv_2||v|dz\right)dxdy\\
\leq&C\|\nabla_Hv\|_2\|\partial_zv_2\|_2^{1/2}(\|\partial_zv_2\|_2^{1/2}+\|\nabla_H\partial_zv_2
\|_2^{1/2})\|v\|_2^{1/2}(\|v\|_2^{1/2}+\|\nabla_Hv\|_2^{1/2})\\
\leq&\frac{1}{8}\|\nabla_Hv\|_2^2 +C(1+\|\partial_zv_2\|_2^4+\|\partial_zv_2\|_2^2\|\partial_z\nabla_Hv_2\|_2^2)\|v\|_2^2\\
\leq&\frac{1}{8}\|\nabla_Hv\|_2^2+C\phi(t)\|v\|_2^2,
\end{align*}
for any $t\in(0,t_0)$.
Noticing that $|v_2(z)|\leq\frac{1}{2h}\int_{-h}^h|v_2(z)|dz+\int_{-h}^h|\partial_zv_2|dz$, it follows from integrating by parts, applying Lemma \ref{lem2.2}, and using Young's inequality that
\begin{align}
&\left|\int_\Omega (v\cdot\nabla_H)v_2\cdot vdxdydz\leq\int_\Omega|\nabla_Hv||v||v_2|dxdydz\right|\nonumber\\
\leq&C\int_M\left(\int_{-h}^h(|v_2|+|\partial_zv_2|)dz\right)\left(\int_{-h}^h|\nabla_Hv||v|dz
\right)dxdy\nonumber\\
\leq&C\|\partial_zv_2\|_2^{1/2}(\|\partial_zv_2\|_2^{1/2}+\|\nabla_H\partial_zv_2\|_2^{1/2})
\|\nabla_Hv\|_2\|v\|_2^{1/2}(\|v\|_2^{1/2}+\|\nabla_Hv\|_2^{1/2})\nonumber\\
&+C\|v_2\|_2^{1/2}(\|v_2\|_2^{1/2}+\|\nabla_H v_2\|_2^{1/2})
\|\nabla_Hv\|_2\|v\|_2^{1/2}(\|v\|_2^{1/2}+\|\nabla_Hv\|_2^{1/2})\nonumber\\
\leq&\frac{1}{8}\|\nabla_Hv\|_2^2+C(1+\|v_2\|_2^4+\|v_2\|_2^2\|\nabla_Hv_2\|_2^2+
\|\partial_zv_2\|_2^4+\|\partial_zv_2\|_2^2\|\partial_z\nabla_Hv_2\|_2^2)\|v\|_2^2\nonumber\\
\leq&\frac{1}{8}\|\nabla_Hv\|_2^2+C\phi(t)\|v\|_2^2,\label{star}
\end{align}
for any $t\in(0,t_0)$.
The above two inequalities, substituted into (\ref{2.7}), imply
\begin{equation}
\frac{d}{dt}\|v(t)\|_2^2+\frac{3}{2}\|\nabla_H v(t)\|^2+2\delta\|\partial_zv(t)\|_2^2
\leq C\phi(t)(\|v(t)\|_2^2+\|T(t)\|_2^2),\label{2.8}
\end{equation}
for any $t\in(0,t_0)$.

Multiplying (\ref{2.6}) by $T$ and integrating by parts yield
\begin{align}
&\frac{1}{2}\frac{d}{dt}\int_\Omega|T|^2dxdydz+\int_\Omega|\nabla_HT|^2dxdydz
\nonumber\\
=&-\int_\Omega\left[v\cdot\nabla_HT_2-\left(\int_{-h}^z\nabla_H\cdot vd\xi\right)\left(\partial_zT_2+\frac{1}{h}\right)\right]Tdxdydz. \label{2.9}
\end{align}
Using the fact that $|T_2(z)|\leq\frac{1}{2h}\int_{-h}^h|T_2(z)|dz+
\int_{-h}^h|\partial_zT_2|dz,$ the same argument as that for (\ref{star}) gives
\begin{align*}
&\left|\int_\Omega v\cdot\nabla_HT_2Tdxdydz\right|=\left|\int_\Omega T_2(\nabla_H\cdot v\, T+v\cdot\nabla_HT)dxdydz\right|\\
\leq&\left|\int_\Omega|\nabla_Hv||T||T_2|dxdydz\right|+\left|\int_\Omega |\nabla_HT||v||T_2|dxdydz\right|\\
\leq&C\left|\int_M\left(\int_{-h}^h(|T_2|+|\partial_zT_2|)dz\right)
\left(\int_{-h}^h|\nabla_Hv||T|dz\right)dxdy\right|\\
&+C\left|\int_M\left(\int_{-h}^h(|T_2|+|\partial_zT_2|)dz\right)
\left(\int_{-h}^h|v||\nabla_HT|dz\right)dxdy\right|\\
\leq&\frac{1}{4}(\|\nabla_Hv\|_2^2+
\|\nabla_H T\|_2^2)+C(1+\|T_2\|_2^4+\|T_2\|_2^2\|\nabla_HT_2\|_2^2+
\|\partial_zT_2\|_2^4\\
&+\|\partial_zT_2\|_2^2\|\partial_z\nabla_HT_2\|_2^2)
(\|v\|_2^2+\|T\|_2^2)\nonumber\\
\leq&\frac{1}{4}(\|\nabla_Hv\|_2^2+
\|\nabla_H T\|_2^2)+C\phi(t)(\|v\|_2^2+\|T\|_2^2),
\end{align*}
for any $t\in(0,t_0)$. Applying Lemma \ref{lem2.1} again, it follows from the Young inequality that
\begin{align*}
&\left|\int_\Omega\left(\int_{-h}^z\nabla_H\cdot vd\xi\right)\left(\partial_zT_2+\frac{1}{h}\right)Tdxdydz\right|\\
\leq&C\int_M\left(\int_{-h}^h|\nabla_Hv|dz\right)\left(\int_{-h}^h(|\partial_zT_2|+1)|T|dz\right)dxdy\\
\leq&C\|\nabla_Hv\|_2\|\partial_zT_2\|_2^{1/2}(\|\partial_zT_2\|_2^{1/2}+\|\nabla_H\partial_zT_2
\|_2^{1/2})\|T\|_2^{1/2}\\
&\times(\|T\|_2^{1/2}+\|\nabla_HT\|_2^{1/2})+C\|\nabla_Hv\|_2\|T\|_2\\
\leq&\frac{1}{4}(\|\nabla_Hv\|_2^2+\|\nabla_HT\|_2^2)+C(1+\|\partial_zT_2\|_2^4+\|\partial_zT_2\|_2^2
\|\nabla_H\partial_zT_2\|_2^2)\|T\|_2^2\\
\leq&\frac{1}{4}(\|\nabla_Hv\|_2^2+\|\nabla_HT\|_2^2)+C\phi(t)\|T\|_2^2,
\end{align*}
for any $t\in(0,t_0)$. Substituting the above two estimates into (\ref{2.9}), one has
\begin{align*}
&\frac{d}{dt}\|T(t)\|_2^2+\|\nabla_HT(t)\|^2
\leq \|\nabla_Hv(t)\|_2^2+C\phi(t)(\|v(t)\|_2^2+\|T(t)\|_2^2),
\end{align*}
for any $t\in(0,t_0)$.

Summing the above inequality up with (\ref{2.8}) leads to
\begin{align*}
&\frac{d}{dt}(\|v(t)\|_2^2+\|T(t)\|_2^2)+\frac{1}{2}(\|\nabla_Hv(t)\|_2^2+\|\nabla_HT(t)\|_2^2+
\delta\|\partial_zv(t)\|_2^2)\\
\leq& C\phi(t)(\|v(t)\|_2^2+\|T(t)\|_2^2),
\end{align*}
for any $t\in(0,t_0)$, which, by Gronwall's inequality, implies the conclusion.
\end{proof}

As a result of Proposition \ref{propcont}, we have the following corollary, which guarantees the uniqueness and continuous dependence on initial data of strong solutions to system (\ref{1.8})--(\ref{1.13}), or equivalently to system (\ref{1.1})--(\ref{1.7}).

\begin{corollary}
  \label{CORO}
Strong solution to system (\ref{1.8})--(\ref{1.13}) is unique and  depends continuously on the initial data.
\end{corollary}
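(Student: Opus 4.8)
The plan is to derive Corollary \ref{CORO} as an immediate consequence of Proposition \ref{propcont}. The key observation is that a strong solution to system (\ref{1.8})--(\ref{1.13}) in the sense of Definition \ref{def1.1} enjoys, in particular, the regularity
\[
(v,T)\in L^\infty(0,t_0;H^1(\Omega))\cap C([0,t_0];L^2(\Omega)),\quad (\partial_tv,\partial_tT)\in L^2(\Omega\times(0,t_0)),\quad (\nabla_Hv,\nabla_HT)\in L^2(0,t_0;H^1(\Omega)),
\]
and it satisfies (\ref{1.8})--(\ref{1.10}) a.e., which is precisely system (\ref{EQ-1})--(\ref{EQ-3}) with the choice $\delta=1$ (note $\Delta=\Delta_H+\partial_z^2$). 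Hence any strong solution falls within the class of functions to which Proposition \ref{propcont} applies, with $\delta=1$.

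Concretely, let $(v_1,T_1)$ and $(v_2,T_2)$ be two strong solutions on $\Omega\times(0,t_0)$ with initial data $((v_1)_0,(T_1)_0)$ and $((v_2)_0,(T_2)_0)$, respectively. I would verify that both satisfy the hypotheses of Proposition \ref{propcont} with $\delta=1$: the membership $\delta\partial_z^2v_i=\partial_z^2v_i\in L^2(\Omega\times(0,t_0))$ follows from $v_i\in L^2(0,t_0;H^3(\Omega))$ (indeed $v_i\in L^\infty(0,t_0;H^2(\Omega))$ already suffices), and the remaining conditions are read off directly from Definition \ref{def1.1}. Moreover, the function $\phi(t)$ defined in (\ref{phi}) is built from $L^2$ norms of $v_2,\partial_zv_2,\nabla_Hv_2,\nabla_H\partial_zv_2$ and the analogous quantities for $T_2$, all of which are bounded on $(0,t_0)$ since $(v_2,T_2)\in L^\infty(0,t_0;H^2(\Omega))$; thus $\int_0^{t_0}\phi(s)\,ds<\infty$.

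Applying Proposition \ref{propcont} then gives, for every $t\in(0,t_0)$,
\[
\sup_{0\leq s\leq t}(\|v_1-v_2\|_2^2+\|T_1-T_2\|_2^2)\leq Ce^{C\int_0^t\phi(s)\,ds}\bigl(\|(v_1)_0-(v_2)_0\|_2^2+\|(T_1)_0-(T_2)_0\|_2^2\bigr),
\]
which is exactly the $C([0,t_0];L^2(\Omega))$ continuous dependence on the initial data. Taking $(v_1)_0=(v_2)_0$ and $(T_1)_0=(T_2)_0$ forces $v_1\equiv v_2$ and $T_1\equiv T_2$ in $L^2$, hence a.e., on $\Omega\times(0,t_0)$, which is uniqueness. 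There is essentially no obstacle here: the whole content has been front-loaded into Proposition \ref{propcont}, and the corollary is just the remark that strong solutions are admissible competitors in that proposition (with $\delta=1$). The only point requiring a line of care is the bookkeeping that $\Delta v=\Delta_Hv+\partial_z^2v$ so that (\ref{1.8}) indeed coincides with (\ref{EQ-1}) at $\delta=1$, and that the regularity in Definition \ref{def1.1} implies the (weaker) regularity demanded in Proposition \ref{propcont}.
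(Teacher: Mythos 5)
Your proposal is correct and follows essentially the same route as the paper: both deduce the corollary directly from Proposition \ref{propcont} (with $\delta=1$, using $\Delta=\Delta_H+\partial_z^2$), after observing that the regularity in Definition \ref{def1.1} guarantees the hypotheses of that proposition and the integrability of $\phi$ on $(0,t_0)$. Your write-up simply makes explicit the bookkeeping that the paper leaves implicit.
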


\begin{proof}
Let $(v_1,T_1)$ and $(v_2,T_2)$ be two strong solutions to system (\ref{1.8})--(\ref{1.13}) on $\Omega\times(0,t_0)$. Recalling the regularity properties of strong solution $(v_2,T_2)$, the function $\phi(t)$ defined by (\ref{phi}) is integrable on the time interval $(0,t_0)$. By virtue of this fact, one can apply Proposition \ref{propcont} to obtain the continuous dependence on initial data, and in particular the uniqueness. This completes the proof.
\end{proof}

\begin{remark}
(i) By Proposition \ref{propcont}, neither the vertical viscosity, i.e. one can take $\delta=0$, nor the vertical diffusion, which is zero in our case, are necessary to guarantee the uniqueness, and continuous dependence on initial data, of the solutions that enjoy the regularity properties stated in the proposition.

(ii) The question of global existence of strong solutions to system (\ref{EQ-1})--(\ref{EQ-3}) with initial data in $H^1$, especially the case that $\delta=0$, is a subject of future work. However, once the existence is established, the uniqueness, and the continuous dependence on initial data, of such solutions will follow from Proposition \ref{propcont}.
\end{remark}

\section{Global Existence of Strong Solutions}\label{sec3}

In this section, we show that the local strong solution established for short time in section \ref{sec2} can
be in fact extended to be a global one.

Let $(v,T)$ be the unique strong solution obtained in Proposition \ref{prop2.1}. Suppose that $(0,\mathcal T^*)$ is the maximal interval of existence. If $\mathcal T^*=\infty$ there is nothing to prove. Therefore, for the next analysis, we assume by contradiction that $\mathcal T^*<\infty$, and we will focus our analysis on $(0,\mathcal T^*)$. We have the following three propositions which will provide the needed a priori estimates on $(v,T)$.

\begin{proposition}\label{lem3.1}
There is a bounded continuously increasing function $K_1(t)$, on $[0,\mathcal T^*)$, such that
\begin{align*}
&\sup_{0\leq s\leq t}(\|v\|_2^2+\|T\|_\infty^2+\|v\|_6^2+\|\nabla_H\bar v\|_{L^2(M)}^2+\|\partial_zv\|_6^2)\\
&+\int_0^t(\|\nabla v\|_2^2+\|\nabla_HT\|_2^2+\|\Delta_H\bar v\|_{L^2(M)})ds\leq K_1(t),
\end{align*}
for any $t\in[0,\mathcal T^*)$.
\end{proposition}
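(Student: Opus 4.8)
The plan is to establish the estimates in the order they naturally build on one another, starting from the basic energy law and then bootstrapping through the special structure of the primitive equations. First I would derive the $L^2$ energy estimate for $(v,T)$ by multiplying (\ref{1.8}) by $v$ and (\ref{1.10}) by $T$, integrating over $\Omega$, and using (\ref{1.9}) together with the cancellation of the transport terms; the only terms that survive are the coupling term involving $\int_{-h}^z T\,d\xi$ and the $\frac{1}{h}$-forcing term, both of which are controlled by Young's inequality against $\|\nabla v\|_2^2$, $\|\nabla_H T\|_2^2$ and lower-order quantities, yielding a Gronwall bound on $\sup_s(\|v\|_2^2+\|T\|_2^2)$ and on $\int_0^t(\|\nabla v\|_2^2+\|\nabla_H T\|_2^2)\,ds$. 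The $L^\infty$ bound on $T$ comes separately: since (\ref{1.10}) is a transport-horizontal-diffusion equation and, after the change of variables, the boundary data for $T$ is homogeneous, one applies the maximum principle to the equation $\partial_t T-\Delta_H T+v\cdot\nabla_H T-(\int_{-h}^z\nabla_H\cdot v\,d\xi)(\partial_z T+\tfrac1h)=0$ — the troublesome term is $-(\int_{-h}^z\nabla_H\cdot v\,d\xi)\frac1h=w\cdot\frac1h$, but integrating the equation against $T^{2p-1}$ and passing $p\to\infty$, or equivalently tracking $\frac{d}{dt}\|T\|_\infty$ via Stampacchia truncation, gives $\|T\|_\infty\le \|T_0\|_\infty+\frac1h\int_0^t\|w\|_\infty\,ds$; this will need to be fed back in only after the $v$-estimates, so I would in fact prove the $\|v\|_6$ and $\partial_z v$ estimates first and close the $L^\infty$ bound at the end, or alternatively observe (as in \cite{CAOTITI3}) that $\|T\|_\infty$ can be bounded purely in terms of $\int_0^t\|v\|$-type norms that are already under control.

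Next I would obtain the $L^6$ estimate on $v$, which is the crucial one flagged in the introduction and is due to Cao–Titi \cite{CAOTITI2,CAOTITI3}: multiply (\ref{1.8}) by $|v|^4 v$, integrate over $\Omega$, integrate by parts in the diffusion term to produce $\||v|^2\nabla v\|_2^2$ plus $\|\nabla|v|^3\|_2^2$ with good signs, and handle the pressure/temperature term $\nabla_H(p_s-\int_{-h}^z T\,d\xi)$ by splitting off the barotropic part. The surface-pressure term $\int_\Omega\nabla_H p_s\cdot|v|^4 v$ is treated by decomposing $v=\bar v+\tilde v$, using $\nabla_H\cdot\bar v=0$ so that $\int_\Omega\nabla_H p_s\cdot|v|^4\bar v$ can be rewritten, and controlling the remainder with the $\tilde v$ Poincaré inequality $\|\tilde v\|_2\le C\|\partial_z v\|_2$; the $\int_{-h}^z T\,d\xi$ term is bounded using $\|T\|_2$ or $\|T\|_\infty$ already in hand. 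This produces a differential inequality of the form $\frac{d}{dt}\|v\|_6^6+c\|\,|v|\nabla|v|^2\|_2^2\le C(1+\|v\|_6^6)\cdot(\text{integrable in }t)$, hence a Gronwall bound on $\sup_s\|v\|_6^2$. In parallel I would run the energy estimate for the barotropic mode $\bar v$: averaging (\ref{1.8}) in $z$ gives a 2D Navier–Stokes-type equation for $\bar v$ on $M$ with forcing from the baroclinic nonlinearity and the temperature term; multiplying by $-\Delta_H\bar v$ and using 2D Ladyzhenskaya/Agmon inequalities together with the already-established $\|v\|_6$ control yields the bound on $\sup_s\|\nabla_H\bar v\|_{L^2(M)}^2$ and $\int_0^t\|\Delta_H\bar v\|_{L^2(M)}^2\,ds$.

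Finally, the $\partial_z v$ estimate: differentiating (\ref{1.8}) in $z$ and noting that $\partial_z$ of the $\int_{-h}^z\nabla_H\cdot v\,d\xi$ term collapses to $(\nabla_H\cdot v)\partial_z v$ (no $z$-integral survives) while $\partial_z$ kills $\nabla_H p_s$ and turns $\partial_z\int_{-h}^z T\,d\xi$ into $T$ — this is precisely why one works with the vertical derivative of $v$, as explained in the introduction. Multiplying the resulting equation by $|\partial_z v|^4\partial_z v$ and integrating, the nonlinear terms $(v\cdot\nabla_H)\partial_z v$, $(\nabla_H\cdot v)\partial_z v$ acting on $|\partial_z v|^4\partial_z v$, and $(\int_{-h}^z\nabla_H\cdot v\,d\xi)\partial_z^2 v$ are estimated using Hölder, the anisotropic Lemma \ref{lem2.2}, and the $\|v\|_6$, $\|\nabla v\|_2$ bounds, while $T$ appears only linearly and is harmless; this gives $\frac{d}{dt}\|\partial_z v\|_6^6\le(\text{integrable})\cdot(1+\|\partial_z v\|_6^6)$, hence the Gronwall bound on $\sup_s\|\partial_z v\|_6^2$. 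Collecting all the Gronwall bounds, defining $K_1(t)$ as the (continuous, increasing) envelope of the right-hand sides, and using continuity in time of $(v,T)$ in $H^1$ to propagate the estimates up to (but not including) $\mathcal T^*$ completes the argument. The main obstacle is the $L^6$ estimate on $v$ and the simultaneous handling of the surface pressure $p_s$ via the barotropic–baroclinic splitting — this is where the absence of vertical diffusion in the $v$-equation would hurt, but here $v$ has full diffusion, so the Cao–Titi argument goes through; the secondary difficulty is arranging the logical order so that the $\|T\|_\infty$ bound (which needs $\|w\|_\infty$, hence a derivative bound on $v$) does not create a circular dependence, which I would resolve by bounding $\|T\|_\infty$ in terms of quantities controlled strictly earlier in the chain or by a direct transport-type argument that only uses $\int_0^t\|\nabla v\|$-norms.
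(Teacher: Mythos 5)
The paper does not actually carry out this proof: it disposes of Proposition \ref{lem3.1} in one line by citing inequalities (59), (69), (91) and (103) of \cite{CAOTITI3} ``with slight modifications.'' Your plan is a reconstruction of exactly that chain of estimates ($L^2$ energy, $\|T\|_\infty$, $\|v\|_6$, $H^1(M)$ for $\bar v$, $\|\partial_zv\|_6$), so in substance you are following the same route the paper delegates to its reference, and the overall architecture is right. Two points in your description need repair, though. First, the $L^6$ estimate cannot be run on $v$ itself by multiplying (\ref{1.8}) by $|v|^4v$: the surface-pressure contribution $\int_\Omega\nabla_Hp_s\cdot|v|^4v\,dxdydz$ does not vanish, and your proposed rewriting of $\int_\Omega\nabla_Hp_s\cdot|v|^4\bar v$ using $\nabla_H\cdot\bar v=0$ fails because of the weight $|v|^4$ (one would need $\nabla_H\cdot(|v|^4\bar v)=0$, which is false). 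The correct Cao--Titi device is to subtract the vertical average of the momentum equation from the equation itself, obtaining an equation for the baroclinic part $\tilde v=v-\bar v$ that contains no $p_s$ at all; the $L^6$ estimate is performed on $\tilde v$, while $\|\bar v\|_6\leq C\|\bar v\|_{H^1(M)}$ is supplied by the coupled 2D Navier--Stokes-type estimate for $\bar v$ that you already describe. Second, your first suggestion for $\|T\|_\infty$, namely $\|T\|_\infty\leq\|T_0\|_\infty+\frac1h\int_0^t\|w\|_\infty\,ds$, would indeed be circular (it needs second derivatives of $v$); but the fix is simpler than you indicate: undo the shift $T\mapsto T+\frac{z}{h}$, so that the original temperature satisfies a source-free transport equation with horizontal diffusion, test it with $|T|^{p-2}T$ (the transport terms cancel by $\nabla_H\cdot v+\partial_zw=0$), and let $p\to\infty$ to get $\|T\|_\infty\leq\|T_0\|_\infty$ unconditionally. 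With that bound available from the start there is no ordering issue, and it is in fact needed before the $\|\partial_zv\|_6$ step, since the forcing $\nabla_HT$ in the $\partial_zv$ equation is handled by integrating by parts against $\|T\|_\infty$ and absorbing $\int|u|^4|\nabla u|^2$ into the dissipation. With these two corrections your outline closes and yields the monotone bound $K_1(t)$ as claimed.
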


\begin{proof}
The conclusion follows directly from inequalities (59), (69), (91) and (103) in \cite{CAOTITI3}, with slight modifications. Thus we omit the proof here.
\end{proof}

Set $u=\partial_zv$, then it satisfies
\begin{eqnarray}
&\partial_tu-\Delta u+(v\cdot\nabla_H)u-\left(\int_{-h}^z\nabla_H\cdot v(x,y,\xi,t)d\xi\right)\partial_zu\nonumber\\
&+(u\cdot\nabla_H)v-(\nabla_H\cdot v)u+f_0k\times u-\nabla_HT=0, \label{3.1}
\end{eqnarray}
on $(0,\mathcal T^*)$.

\begin{proposition}\label{lem3.2}
There is a bounded continuously increasing function $K_2(t)$, on $[0,\mathcal T^*)$, such that
\begin{align*}
&\sup_{0\leq s\leq t}\|\nabla u\|_2^2+\int_0^t\|\nabla^2u\|_2^2ds\leq K_2(t),
\end{align*}
for any $t\in[0,\mathcal T^*). $
\end{proposition}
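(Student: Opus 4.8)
\textbf{Proof proposal for Proposition \ref{lem3.2}.}

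The plan is to perform an $H^1$-energy estimate on the equation (\ref{3.1}) for $u=\partial_z v$, that is, to apply $\nabla$ to (\ref{3.1}), test against $\nabla u$ (equivalently multiply (\ref{3.1}) by $-\Delta u$) and integrate over $\Omega$. Using Lemma \ref{lem2.0} (with the role of $g$ played componentwise by $u$, which has the required regularity since $v\in L^2(0,\mathcal T^*;H^3)$ and $\partial_t v\in L^2(0,\mathcal T^*;H^1)$), this yields
\begin{equation*}
\frac12\frac{d}{dt}\|\nabla u\|_2^2+\|\nabla^2 u\|_2^2 = \int_\Omega \Big[(v\cdot\nabla_H)u-\Big(\int_{-h}^z\nabla_H\cdot v\,d\xi\Big)\partial_z u+(u\cdot\nabla_H)v-(\nabla_H\cdot v)u+f_0 k\times u-\nabla_H T\Big]\cdot\Delta u\,dxdydz,
\end{equation*}
up to sign bookkeeping. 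The Coriolis term integrates to zero (or is absorbed trivially), and the $\nabla_H T$ term is controlled by $\|\nabla_H T\|_2\|\Delta u\|_2\le \frac18\|\nabla^2 u\|_2^2+C\|\nabla_H T\|_2^2$, which is integrable in time by Proposition \ref{lem3.1}. The remaining four nonlinear terms are where the work lies.

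For the quadratic-in-$v$ terms $(u\cdot\nabla_H)v$ and $(\nabla_H\cdot v)u$, note that $\nabla_H v$ and $u=\partial_z v$ together form components of $\nabla v$; the key available bounds from Proposition \ref{lem3.1} are $\|v\|_6$, $\|u\|_6=\|\partial_z v\|_6$, and $\int_0^t\|\nabla v\|_2^2$ (hence $\|\nabla v\|_2$ is only in $L^2_t$, not $L^\infty_t$). A typical term like $\int_\Omega (u\cdot\nabla_H v)\cdot\Delta u$ is estimated by $\|u\|_6\|\nabla_H v\|_3\|\Delta u\|_2$; then $\|\nabla_H v\|_3\le C\|\nabla v\|_2^{1/2}\|\nabla v\|_6^{1/2}\le C\|\nabla v\|_2^{1/2}(\|\nabla^2 v\|_2+\|\nabla v\|_2)^{1/2}$, and after Young's inequality one absorbs a small multiple of $\|\nabla^2 u\|_2^2$ (and uses $\|\nabla^2 v\|_2\lesssim\|\nabla u\|_2+\|\nabla\nabla_H v\|_2$; the horizontal second derivatives of $v$ need to be controlled separately — this is exactly the term $\|\Delta_H\bar v\|$ plus the elliptic estimate for $\tilde v$, or else one restricts attention to terms where $\nabla^2 v$ genuinely reduces to $\nabla u$ and $\nabla_H u$). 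The genuinely delicate term is $\big(\int_{-h}^z\nabla_H\cdot v\,d\xi\big)\partial_z u$ tested against $\Delta u$: here Lemma \ref{lem2.2} is essential, writing the $z$-integral as the outer factor $f=\nabla_H\cdot v$, and $g=\partial_z u=\partial_z^2 v$, $h=$ (a component of $\Delta u$), to get a bound of the form $C\|\nabla_H v\|_2^{1/2}(\|\nabla_H v\|_2^{1/2}+\|\nabla_H^2 v\|_2^{1/2})\|\partial_z u\|_2\|\Delta u\|_2^{1/2}(\|\Delta u\|_2^{1/2}+\|\nabla_H\Delta u\|_2^{1/2})$, but since we only control $\|\nabla^2 u\|_2$ and not $\|\nabla_H\Delta u\|_2$, one must instead integrate by parts to move one derivative off $\Delta u$, or split $\partial_z u$ using the $L^6$ bound on $u$ and the fact that $\|u\|_6$ is already under control from Proposition \ref{lem3.1}, e.g. $\int_\Omega|\partial_z(\text{stuff})|$ estimated via $\|u\|_6$, $\|\nabla u\|_2$, $\|\nabla^2 u\|_2$ with Gagliardo–Nirenberg interpolation $\|\nabla u\|_3\le C\|\nabla u\|_2^{1/2}\|\nabla^2 u\|_2^{1/2}$, and Young.

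The upshot of all these estimates is a differential inequality of the form
\begin{equation*}
\frac{d}{dt}\|\nabla u\|_2^2+\|\nabla^2 u\|_2^2\le C(1+\|\nabla v\|_2^2+\|\nabla_H T\|_2^2)\,(1+\|\nabla u\|_2^2)\,g(t)+C\|\nabla_H T\|_2^2,
\end{equation*}
where $g(t)$ collects lower-order quantities bounded by $K_1(t)$; the coefficient multiplying $\|\nabla u\|_2^2$ is integrable in time on $[0,\mathcal T^*)$ precisely because $\int_0^t\|\nabla v\|_2^2$ and $\int_0^t\|\nabla_H T\|_2^2$ are finite by Proposition \ref{lem3.1}. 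Gronwall's inequality then produces $\sup_{0\le s\le t}\|\nabla u\|_2^2+\int_0^t\|\nabla^2 u\|_2^2\,ds\le K_2(t)$ with $K_2$ bounded and continuously increasing on $[0,\mathcal T^*)$, using that $\nabla u_0=\nabla\partial_z v_0\in L^2$ since $v_0\in H^2$. The main obstacle, as indicated above, is the "stronger nonlinearity" term $\big(\int_{-h}^z\nabla_H\cdot v\,d\xi\big)\partial_z u$ against $\Delta u$: one only has $\|\nabla^2 u\|_2$ on the left, so the anisotropic estimate of Lemma \ref{lem2.2} must be applied in a way that never generates $\|\nabla_H\Delta u\|_2$ (which is not available at this stage), forcing a careful choice of which factor plays which role and, where necessary, an integration by parts to redistribute derivatives before applying Hölder and the interpolation inequalities.
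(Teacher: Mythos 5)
Your overall framework (an $H^1$ energy estimate for $u=\partial_z v$ using Lemma \ref{lem2.2} and the $L^6$ bounds from Proposition \ref{lem3.1}) is the right one, but the single-step estimate obtained by testing (\ref{3.1}) against $-\Delta u$ does not close, and you essentially concede this without resolving it. The obstruction is exactly the term you flag, $\int_\Omega\big(\int_{-h}^z\nabla_H\cdot v\,d\xi\big)\partial_z u\cdot\Delta_H u$. Whichever role you assign in Lemma \ref{lem2.2}, the factor $\Delta_H u$ must be the one appearing without a horizontal gradient (since $\|\nabla_H\Delta_H u\|_2$ is unavailable), and then the other two factors contribute $\|\nabla_H v\|_2^{1/2}(\cdots+\|\nabla_H^2 v\|_2^{1/2})$ and $\|\partial_z u\|_2^{1/2}(\cdots+\|\nabla_H\partial_z u\|_2^{1/2})$. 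After Young's inequality the Gronwall coefficient of $\|\nabla_H u\|_2^2$ contains $\|\partial_z u\|_2^2\,\|\nabla\partial_z u\|_2^2$; in a one-shot estimate this is bounded only by $\|\nabla u\|_2^2\,\|\nabla^2 u\|_2^2$, i.e.\ by the dissipation you are trying to produce, so the inequality is supercritical and Gronwall does not close (integrating by parts horizontally instead produces $\nabla_H^2 v$ inside the vertical integral, which is again only controlled by the dissipation plus $\|\Delta_H\bar v\|_{L^2(M)}$, with the same failure). The clean differential inequality you assert at the end, with a coefficient controlled by $\|\nabla v\|_2^2+\|\nabla_HT\|_2^2$, is therefore not what the computation actually yields.

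The paper's proof avoids this by splitting the $H^1$ estimate into two ordered steps. First it tests (\ref{3.1}) against $-\partial_z^2 u$: there the critical term is harmless because $\partial_z u\cdot\partial_z^2 u=\tfrac12\partial_z|\partial_z u|^2$, so integrating by parts in $z$ converts $\big(\int_{-h}^z\nabla_H\cdot v\,d\xi\big)$ into the local factor $\nabla_H\cdot v$, and after a further horizontal integration by parts everything is bounded by $(\|u\|_6+\|v\|_6)\|\nabla_H\partial_z u\|_2\|\partial_z u\|_3$, closing via Gronwall with the uniformly bounded coefficient $\|u\|_6^4+\|v\|_6^4$. This yields $\sup_t\|\partial_z u\|_2^2+\int_0^t\|\nabla\partial_z u\|_2^2\,ds\le K_2'(t)$. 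Only then does one test against $-\Delta_H u$; now $\|\partial_z u\|_2^2\|\nabla\partial_z u\|_2^2$ is a \emph{known} time-integrable coefficient (bounded by $(K_2')^2$ in $L^1_t$), and together with the decomposition $|\nabla_H v|\le|\nabla_H\bar v|+\int_{-h}^h|\nabla_H u|\,d\xi$ (which routes the horizontal second derivatives of $v$ through $\|\Delta_H\bar v\|_{L^2(M)}$ and $\|\nabla\nabla_H u\|_2$) the Gronwall argument closes. This ordering—vertical derivative first, horizontal second—is precisely the point the paper emphasizes in the introduction, and it is the missing idea in your proposal.
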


\begin{proof}
By the boundary conditions (\ref{1.11}) and (\ref{1.12}), $u=\partial_zv$ is $2h$ periodic and odd in the vertical variable $z$, and thus $u(x,y,-h,t)=-u(x,y,h,t)=-u(x,y,-h,t)$, which implies $u|_{z=-h}=0$ and $\nabla_Hu|_{z=-h}=0$, for any $t\in(0,\mathcal T^*)$. Multiplying equation (\ref{3.1}) by $-\partial_z^2u$ and integration by parts, using
Lemma \ref{lem2.0} and the fact that $|\nabla_Hu(x,y,z,t)|\leq\int_{-h}^h|\nabla_H\partial_zu(x,y,\xi,t)|d\xi$,
then it follows from the H\"older, Sobolev and Poincar\'e inequalities that
\begin{align*}
&\frac{1}{2}\frac{d}{dt}\int_\Omega|\partial_zu|^2dxdydz+\int_\Omega|\partial_z\nabla u|^2dxdydz\\
=&\int_\Omega\bigg[(v\cdot\nabla_H)u-\left(\int_{-h}^z\nabla_H\cdot vd\xi\right)\partial_zu\bigg]\cdot\partial_z^2udxdydz\\
&+\int_\Omega\big[(u\cdot\nabla_H)v-(\nabla_H\cdot v)u+f_0k\times u-\nabla_HT\big]\cdot\partial_z^2udxdydz\\
=&-\int_\Omega\big\{[2(u\cdot\nabla_H)u-2(\nabla_H\cdot v)\partial_zu+(\partial_zu\cdot\nabla_H)v\\
&-(\nabla_H\cdot u)u]\cdot\partial_zu+\nabla_HT\cdot\partial_z^2u\big\}dxdydz\\
=&-\int_\Omega\big\{[2(u\cdot\nabla_H)u-(\nabla_H\cdot u)u]\cdot\partial_zu+2v\cdot\nabla_H(|\partial_zu|^2)\\
&-\nabla_H\cdot\partial_zuv\cdot\partial_zu-(\partial_zu\cdot\nabla_H)\partial_zu\cdot v+\nabla_HT\cdot\partial_z^2u\big\}dxdydz\\
\leq&C\int_\Omega\bigg[|u|\left(\int_{-h}^h|\nabla_H\partial_zu|d\xi\right)|\partial_zu|
+|v||\nabla_H\partial_zu||\partial_zu|+|\nabla_HT||\partial_z^2u|\bigg]dxdydz\\
\leq&C\big[(\|u\|_6+\|v\|_6)\|\nabla_H\partial_zu\|_2\|\partial_zu\|_3+\|\nabla_HT\|_2
\|\partial_z^2u\|_2\big]\\
\leq&C\big[(\|u\|_6+\|v\|_6)\|\nabla_H\partial_zu\|_2\|\partial_zu\|_2^{1/2}\|\nabla
\partial_zu\|_2^{1/2}+\|\nabla_HT\|_2\|\partial_z^2u\|_2\big]\\
\leq&\frac{1}{2}\|\nabla\partial_zu\|_2^2+C(\|u\|_6^4+\|v\|_6^4)\|\partial_zu\|_2^2
+C\|\nabla_HT\|_2^2.
\end{align*}
Thanks to Proposition \ref{lem3.1}, this inequality gives
\begin{align}
&\sup_{0\leq s\leq t}\|\partial_zu\|_2^2+\int_0^t\|\nabla\partial_zu\|_2^2ds\nonumber\\
\leq&e^{C\int_0^t(\|u\|_6^4+\|v\|_6^4)ds}\left(\|\partial_zu_0\|_2^2+C\int_0^t
\|\nabla_HT\|_2^2ds\right)\nonumber\\
\leq&Ce^{K_1^2(t)t}(\|v_0\|_{H^2}^2+K_1(t))=:K_2'(t), \label{3.2}
\end{align}
for every $t\in[0,\mathcal T^*)$.

Multiplying (\ref{3.1}) by $-\Delta_Hu$ and integrating by parts, using Lemma \ref{lem2.0} and the fact that
$|\nabla_Hv(x,y,z,t)|\leq|\nabla_H\bar v(x,y,t)|+\int_{-h}^h|\nabla_Hu(x,y,\xi,t)|d\xi$, then it follows from the H\"older, Sobolev, Poincar\'e inequality and Lemma \ref{lem2.2} that
\begin{align*}
&\frac{1}{2}\frac{d}{dt}\int_\Omega|\nabla_Hu|^2dxdydz+\int_\Omega|\nabla_H\nabla u|^2dxdydz\\
=&\int_\Omega\bigg[(v\cdot\nabla_H)u-\left(\int_{-h}^z\nabla_H\cdot vd\xi\right)\partial_zu+(u\cdot\nabla_H)v\\
&-(\nabla_H\cdot v)u+f_0k\times u-\nabla_HT\bigg]\cdot\Delta_Hudxdydz\\
\leq&C\int_\Omega\bigg[|v||\nabla_Hu|+\left(\int_{-h}^h|\nabla_Hu|dz+|\nabla_H\bar v|\right)(|\partial_zu|+|u|)\\
&+|\nabla_HT|\bigg]|\Delta_Hu|dxdydz\\
\leq&C\big[\|v\|_6\|\nabla_Hu\|_3+\|u\|_6(\|\nabla_Hu\|_3+\|\nabla_H\bar v\|_3)+\|\nabla_HT\|_2\big]\|\Delta_Hu\|_2\\
&+C\int_M\left[\left(\int_{-h}^h|\nabla_Hu|d\xi\right)+|\nabla_H\bar v|\right]\left(\int_{-h}^h
|\partial_zu||\Delta_Hu|d\xi\right)dxdy\\
\leq&C\big[(\|u\|_6+\|v\|_6)\|\nabla_Hu\|_3+\|u\|_6\|\nabla_H\bar v\|_3+\|\nabla_HT\|_2\big]\|\Delta_Hu\|_2\\
&+C\big[\|\nabla_Hu\|_2^{1/2}(\|\nabla_Hu\|_2^{1/2}+\|\nabla^2_Hu\|_2^{1/2})+
\|\nabla_H\bar v\|_2^{1/2}(\|\nabla_H\bar v\|_2^{1/2}+\|\nabla_H^2\bar v\|_2^{1/2})\big]\\
&\times\|\partial_z u\|_2^{1/2}(\|\partial_zu\|_2^{1/2}+\|\nabla_H\partial_zu\|_2^{1/2})\|\Delta_Hu\|_2\\
\leq&C\big[(\|u\|_6+\|v\|_6)\|\nabla_Hu\|_2^{1/2}\|\nabla\nabla_Hu\|_2^{1/2}+\|u\|_6\|\nabla_H\bar v\|_{L^2(M)}^{2/3}\|\Delta_H\bar v\|_{L^2(M)}^{1/3}\\
&+\|\nabla_HT\|_2\big]\|\Delta_Hu\|_2+C(\|\nabla_H\bar v\|_{L^2(M)}^{1/2}\|\Delta_H\bar v\|_{L^2(M)}^{1/2}+\|\nabla_Hu\|_2^{1/2}\|\nabla\nabla_Hu\|_2^{1/2})\\
&\times\|\partial_zu\|_2^{1/2}\|\nabla\partial_zu\|_2^{1/2}\|\Delta_Hu\|_2\\
\leq&\frac{1}{2}\|\nabla_H\nabla u\|_2^2+C(\|u\|_6^4+\|v\|_6^4)\|\nabla_Hu\|_2^2+C(\|u\|_6^2\|\Delta_H\bar v\|_{L^2(M)}^2+\|\nabla_HT\|_{L^2(M)}^2)\\
&+C\|\partial_zu\|_2^2\|\nabla\partial_zu\|_2^2\|\nabla_Hu\|_2^2+C\|\nabla_H\bar v\|_{L^2(M)}\|\Delta_H\bar v\|_{L^2(M)}\|\partial_zu\|_2\|\nabla\partial_zu\|_2\\
\leq&\frac{1}{2}\|\nabla_H\nabla u\|_2^2+C(\|u\|_6^4+\|v\|_6^4+\|\partial_zu\|_2^2\|\nabla\partial_zu\|_2^2)\|\nabla_Hu\|_2^2\\
&+C(\|u\|_6^2\|\Delta_H\bar v\|_{L^2(M)}^2+\|\nabla_HT\|_2^2+\|\nabla_H\bar v\|_{L^2(M)}^2\|\Delta_H\bar v\|_{L^2(M)}^2+\|\partial_zu\|_2^2\|\nabla\partial_zu\|_2^2).
\end{align*}
Therefore, by Proposition \ref{lem3.1}, and using (\ref{3.2}), it follows from the above inequality that for any $t\in[0,\mathcal \mathcal T^*)$
\begin{align*}
&\sup_{0\leq s\leq t}\|\nabla_Hu\|_2^2+\int_0^t\|\nabla\nabla_Hu\|_2^2ds\\
\leq&e^{C\int_0^t(\|u\|_6^4+\|v\|_6^4+\|\partial_zu\|_2^2\|\nabla\partial_zu\|_2^2)ds}
\bigg[\|\nabla_Hu_0\|_2^2+C\int_0^t(\|u\|_6^2\|\Delta_H\bar v\|_{L^2(M)}^2\\
&+\|\nabla_HT\|_2^2+\|\nabla_H\bar v\|_{L^2(M)}^2\|\Delta_H\bar v\|_{L^2(M)}^2+\|\partial_zu\|_2^2\|\nabla\partial_zu\|_2^2)ds\bigg]\\
\leq&Ce^{C(K_1^2(t)t+K_2'^2(t))}(\|v_0\|_{H^2}^2+K_1^2(t)+K_1(t)+K_2'^2(t))=:K_2''(t).
\end{align*}

Combining this inequality with (\ref{3.2}), we have
\begin{equation*}
\sup_{0\leq s\leq t}\|\nabla u\|_2^2+\int_0^t\|\nabla^2u\|_2^2ds\leq K_2'(t)+K_2''(t)=:K_2(t),
\end{equation*}
for any $t\in[0,\mathcal T^*)$, completing the proof.
\end{proof}

\begin{proposition}\label{lem3.3}
There is a bounded continuously increasing function $K_3(t)$, on $[0,\mathcal T^*)$, such that
\begin{align*}
\sup_{0\leq s\leq t}(\|\Delta_Hv\|_2^2+\|\Delta T\|_2^2)+\int_0^t(\|\nabla\Delta_Hv\|_2^2+\|\nabla_H\Delta T\|_2^2)\leq K_3(t),
\end{align*}
for any $t\in[0,\mathcal T^*)$.
\end{proposition}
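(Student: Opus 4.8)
\emph{Proof idea.} The plan is to close the a priori estimates at the second-derivative level by differentiating the equations once more, testing against the natural quantities, adding the resulting energy inequalities for $v$ and $T$, and applying Gronwall's inequality with a coefficient already rendered integrable on $[0,\mathcal T^*)$ by Propositions \ref{lem3.1} and \ref{lem3.2}. For the velocity, apply $\Delta_H$ to (\ref{1.8}) and take the $L^2(\Omega)$ inner product with $\Delta_H v$: the viscous term $-\Delta v$ produces the coercive quantity $\|\nabla\Delta_H v\|_2^2$, the Coriolis term drops by skew-symmetry, and the right-hand side is $\Delta_H$ applied to $(v\cdot\nabla_H)v$, to $\big(\int_{-h}^z\nabla_H\cdot v\,d\xi\big)\partial_z v$, and to $\nabla_H\big(\int_{-h}^z T\,d\xi\big)$, each paired with $\Delta_H v$. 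For the temperature, apply the full Laplacian $\Delta$ to (\ref{1.10}) and test against $\Delta T$; by the integration-by-parts identities of Lemma \ref{lem2.0} the diffusion term contributes $\int_\Omega\nabla\Delta_H T\cdot\nabla\Delta T=\sum_{a\in\{x,y\},\,b,c\in\{x,y,z\}}\|\partial_a\partial_b\partial_c T\|_2^2$, which is nonnegative and bounds $\tfrac13\|\nabla_H\Delta T\|_2^2$ from below, so that the missing vertical diffusion still leaves a dissipation controlling every third derivative of $T$ other than $\partial_z^3 T$. Since the nonlinear terms below consume $\|\nabla T\|_2$, $\|\partial_z T\|_2$ and $\|\nabla_H\partial_z T\|_2$, I would first establish, by $L^2$ energy estimates on the $\nabla_H$- and $\partial_z$-differentiated temperature equation (which carry horizontal diffusion, hence are clean), the preliminary bounds $\sup_{0\le s\le t}(\|\nabla T\|_2^2+\|\partial_z T\|_2^2)+\int_0^t\|\nabla_H\partial_z T\|_2^2\,ds\le K(t)$ with $K$ bounded and increasing on $[0,\mathcal T^*)$.

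The bulk of the work is the estimation of the nonlinear right-hand sides. For the velocity part I would expand by the Leibniz rule and, whenever a third horizontal derivative of $v$ appears, integrate by parts in $x,y$ so that it is paired with $\nabla\Delta_H v$ and absorbed into $\tfrac18\|\nabla\Delta_H v\|_2^2$; the surviving factors are then handled by the H\"older, Sobolev, Gagliardo--Nirenberg and Poincar\'e inequalities, Lemma \ref{lem2.2}, Young's inequality and the bounds of Propositions \ref{lem3.1}--\ref{lem3.2} (in particular $v,\partial_z v\in L^\infty_tL^6$, $\nabla\partial_z v\in L^\infty_tL^2$, $\nabla^2\partial_z v\in L^2_tL^2$, $\nabla_H\bar v\in L^\infty_tL^2(M)$, $\Delta_H\bar v\in L^1_tL^2(M)$). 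The genuinely stronger term $\Delta_H\big[\big(\int_{-h}^z\nabla_H\cdot v\,d\xi\big)\partial_z v\big]\cdot\Delta_H v$ I would treat by Lemma \ref{lem2.2}, letting a second-order derivative of $v$ play the role of the factor $f$ and collecting the derivatives destined for $\nabla\Delta_H v$ in the factor $h$, then absorbing the top-order piece. The term $\Delta_H\nabla_H\big(\int_{-h}^z T\,d\xi\big)\cdot\Delta_H v=\nabla_H\big(\int_{-h}^z\Delta_H T\,d\xi\big)\cdot\Delta_H v$ couples the two unknowns: after one integration by parts it is bounded by $\tfrac18\|\nabla_H\Delta T\|_2^2+C\|\Delta_H v\|_2^2$, so the velocity and temperature inequalities must be added before Gronwall, the first term going into the temperature dissipation.

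The main obstacle is precisely the absence of vertical diffusion in the temperature equation, which generates right-hand side terms carrying a third vertical derivative of $T$ that no dissipation controls. Such terms occur only through the vertical velocity $w=-\int_{-h}^z\nabla_H\cdot v\,d\xi$, namely inside $\partial_z^2\big[\big(\int_{-h}^z\nabla_H\cdot v\,d\xi\big)\partial_z T\big]$, whose dangerous summand is $w\,\partial_z^3 T$. The key point is that when this is tested against $\Delta T$ one may integrate by parts in $z$ (the boundary terms vanish since $w|_{z=\pm h}=0$ by periodicity and parity): against the $\partial_z^2 T$ part of $\Delta T$ it becomes $-\tfrac12\int_\Omega(\nabla_H\cdot v)|\partial_z^2 T|^2\,dxdydz$, and against the $\Delta_H T$ part it becomes, after a further horizontal integration by parts, $\int_\Omega(\nabla_H\cdot v)\partial_z^2 T\,\Delta_H T\,dxdydz-\int_\Omega\nabla_H\big[\big(\int_{-h}^z\nabla_H\cdot v\,d\xi\big)\partial_z^2 T\big]\cdot\nabla_H\partial_z T\,dxdydz$; in all cases the surviving factors involve at most $\nabla_H\partial_z^2 T$ or $\nabla_H^2\partial_z T$ (absorbed into the dissipation), $\Delta_H v$ or $\nabla_H^2 v$ (part of the unknown or $L^\infty_tL^2$-bounded), and $\partial_z^2 T,\nabla_H\partial_z T,\partial_z T,\Delta_H T$ (controlled by the preliminary step and by $\|\Delta T\|_2$). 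To keep the multiplicative coefficients free of the top-order unknown, I would, whenever a pointwise $\nabla_H\cdot v$ multiplies a square, replace it by $\int_{-h}^h|\nabla_H\partial_z v|\,d\xi$ — legitimate because $\nabla_H\cdot\bar v=0$ — and invoke Lemma \ref{lem2.2}, so that only the integrable quantities $\|\nabla^2\partial_z v\|_2^2$, $\|\nabla_H\partial_z T\|_2^2$, $\|\partial_z v\|_6^4$, etc.\ enter.

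Combining the two energy inequalities then gives
\[
\frac{d}{dt}\big(\|\Delta_H v\|_2^2+\|\Delta T\|_2^2\big)+\tfrac14\big(\|\nabla\Delta_H v\|_2^2+\|\nabla_H\Delta T\|_2^2\big)\le G(t)\big(1+\|\Delta_H v\|_2^2+\|\Delta T\|_2^2\big),
\]
where $G$ is assembled from quantities already bounded in Propositions \ref{lem3.1}--\ref{lem3.2} and the preliminary step (e.g.\ $\|v\|_6^4$, $\|\partial_z v\|_6^4$, $\|\nabla\partial_z v\|_2^2\|\nabla^2\partial_z v\|_2^2$, $\|\Delta_H\bar v\|_{L^2(M)}^2$, $\|\nabla_H\partial_z T\|_2^2$), so that $G\in L^1(0,\mathcal T^*)$. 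Gronwall's inequality then yields the stated bound with $K_3(t)=C\big(1+\|v_0\|_{H^2}^2+\|T_0\|_{H^2}^2\big)\exp\big(\int_0^t G(s)\,ds\big)$, which is continuous, nondecreasing, and finite on $[0,\mathcal T^*)$ because $\mathcal T^*<\infty$. I expect the delicate part to be the simultaneous bookkeeping of the $w\,\partial_z^3 T$ cancellation and the coupling term $\nabla_H\big(\int_{-h}^z\Delta_H T\,d\xi\big)\cdot\Delta_H v$, namely ensuring that every top-order contribution is absorbed into one of the two dissipations and that no power higher than the first of $\|\Delta_H v\|_2^2+\|\Delta T\|_2^2$ is left with a non-integrable coefficient.
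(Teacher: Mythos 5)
Your proposal follows essentially the same route as the paper: a second-order energy estimate for $\Delta_H v$ with dissipation $\|\nabla\Delta_H v\|_2^2$ and for $\Delta T$ with dissipation $\|\nabla_H\Delta T\|_2^2$, the missing vertical diffusion handled exactly as you describe (vertical integration by parts exploiting $w=-\int_{-h}^z\nabla_H\cdot v\,d\xi$ and $\nabla_H\cdot\bar v=0$, Lemma \ref{lem2.2}, and the bounds of Propositions \ref{lem3.1}--\ref{lem3.2}), closed by Gronwall. The only bookkeeping point you leave implicit is that the temperature estimate consumes the velocity dissipation with a coefficient of order $\|T\|_\infty^2\le K_1(t)$, which is bounded but not small, so before summing one must weight the velocity inequality by a large multiple of $1+K_1(t)$, as the paper does; this is a routine fix and does not affect the correctness of your plan.
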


\begin{proof}
It follows from integrating by parts, the Sobolev embedding inequality and the Poincar\'e inequality that
\begin{align*}
&\int_\Omega|\nabla_Hv|^4dxdydz=-\int_\Omega\nabla_H\cdot(|\nabla_Hv|^2\nabla_Hv)vdxdydz\\
\leq& C\int_\Omega|\nabla_Hv|^2|v||\nabla^2_Hv|dxdydz\leq C\|\nabla_Hv\|_4^2\|v\|_6\|\nabla_H^2v\|_3
\end{align*}
and
\begin{align*}
&\int_\Omega|\nabla^2_Hv|^3dxdydz=-\int_\Omega\nabla_H\cdot(|\nabla_H^2v|\nabla_H^2v)\nabla_Hvdx
dydz\\
\leq&C\int_\Omega|\nabla_H^2v||\nabla_H^3v||\nabla_Hv|dxdydz\leq C\|\nabla_H^2v\|_3\|\nabla_H^3v\|_2\|\nabla_Hv\|_6\\
\leq&C\|\nabla_H^2v\|_3\|\nabla_H^3v\|_2\|\nabla_H\nabla v\|_2=C\|\nabla_H^2v\|_3\|\nabla_H\Delta_Hv\|_2\|\nabla_H\nabla v\|_2.
\end{align*}
The above two inequalities imply
$$
\|\nabla_Hv\|_4^2\leq C\|v\|_6\|\nabla_H^2v\|_3,\quad \|\nabla_Hv\|_3\leq C\|\nabla_H\Delta_Hv\|_2^{1/2}\|\nabla_H\nabla v\|_2^{1/2},
$$
and thus
\begin{equation}\label{3.4}
\|\nabla_Hv\|_4^2\leq C\|v\|_6\|\nabla_H\nabla v\|_2^{1/2}\|\nabla_H\Delta_Hv\|_2^{1/2}.
\end{equation}

Applying the operator $\nabla_H$ to equation (\ref{1.8}), multiplying the resulting
equation by $-\nabla_H\Delta_Hv$ and integrating over $\Omega$, then it follows from Lemma \ref{lem2.0}, (\ref{1.9}), (\ref{3.4}), the H\"older, Sobolev and Poincar\'e inequalities that
\begin{align*}
&\frac{1}{2}\frac{d}{dt}\int_\Omega|\Delta_Hv|^2dxdydz+\int_\Omega|\nabla\Delta_Hv|^2dxdydz
\nonumber\\
=&\int_\Omega\nabla_H\bigg[(v\cdot\nabla_H)v-\left(\int_{-h}^z\nabla_H\cdot vd\xi\right)\partial_zv-\nabla_H\left(\int_{-h}^zTd\xi\right)\bigg]:\nabla_H\Delta_Hvdxdydz\nonumber\\
\leq&C\int_\Omega\bigg[|v||\nabla_H^2v|+|\nabla_Hv|^2+\left(\int_{-h}^h|\nabla_H^2v|d\xi\right)
|\partial_zv|+\left(\int_{-h}^h|\nabla_H\cdot v|d\xi\right)|\partial_z\nabla_Hv|\nonumber\\
&+\left(\int_{-h}^h|\nabla_H^2T|d\xi\right)\bigg]|\nabla_H\Delta_Hv|dxdydz\nonumber\\
\leq&C(\|v\|_6\|\nabla_H^2v\|_3+\|\nabla_Hv\|_4^2+\|\nabla_H^2v\|_3\|\partial_zv\|_6
+\|\nabla_Hv\|_3
\|\nabla_Hu\|_6\nonumber\\
&+\|\nabla_H^2T\|_2)\|\nabla_H\Delta_Hv\|_2\nonumber\\
\leq&C\big[(\|u\|_6+\|v\|_6)\|\nabla_H^2v\|_2^{1/2}(\|\nabla_H^2v\|_2^{1/2}+\|\nabla\nabla_H^2v
\|_2^{1/2})\nonumber\\
&+\|v\|_6\|\nabla_H\nabla v\|_2^{1/2}\|\nabla_H\Delta_Hv\|_2^{1/2}+\|\nabla\nabla_Hv\|_2\|\nabla
\nabla_Hu\|_2+\|\Delta_H T\|_2\big]\|\nabla_H\Delta_Hv\|_2\nonumber\\
\leq&C\big[(\|v\|_6+\|u\|_6)\|\Delta_Hv\|_2^{1/2}\|\nabla\Delta_Hv\|_2^{1/2}+\|v\|_6(\|\Delta_Hv
\|_2^{1/2}+\|\nabla_Hu\|_2^{1/2})\nonumber\\
&\times\|\nabla_H\Delta_Hv\|_2^{1/2}+(\|\Delta_Hv\|_2+\|\nabla_Hu\|_2)\|\nabla^2u\|_2+\|\Delta_HT\|_2\big]\|\nabla_H\Delta_Hv\|_2\nonumber\\
\leq&\frac{1}{2}\|\nabla\Delta_Hv\|_2^2+C[(\|u\|_6^4+\|v\|_6^4+\|\nabla^2u\|_2^2)(\|\Delta_Hv\|_2^2
+\|\nabla_Hu\|_2^2)+\|\Delta_HT\|_2^2],
\end{align*}
and thus
\begin{align}
  &\frac{d}{dt}\|\Delta_Hv\|_2^2+\|\nabla\Delta_Hv\|_2^2\nonumber\\
  \leq&C[(\|u\|_6^4+\|v\|_6^4+\|\nabla^2u\|_2^2)(\|\Delta_Hv\|_2^2
+\|\nabla_Hu\|_2^2)+\|\Delta_HT\|_2^2]. \label{3.5}
\end{align}

Applying the operator $\nabla$ to equation (\ref{1.10}), multiplying the resulting equation by $-\nabla\Delta T$ and integrating over $\Omega$, using the facts that
\begin{eqnarray*}
&&|\Delta v(x,y,z,t)|\leq|\Delta_H\bar v(x,y,t)|+\int_{-h}^h|\Delta u(x,y,\xi,t)|d\xi,\\
&&|\nabla v(x,y,z,t)|\leq|\nabla_H\bar v(x,y,t)|+\int_{-h}^h|\nabla u(x,y,\xi,t)|d\xi,\\
&&|\nabla\nabla_H\cdot v(x,y,z,t)|\leq\int_{-h}^h|\nabla\nabla_H\cdot u(x,y,\xi,t)|d\xi,
\end{eqnarray*}
then it follows from integrating by parts, Lemma \ref{lem2.2}, Lemma \ref{lem2.0}, the H\"older, Sobolev and
Poincar\'e inequalities that
\begin{align}
&\frac{1}{2}\frac{d}{dt}\int_\Omega|\Delta T|^2dxdydz+\int_\Omega|\nabla_H\Delta T|^2dxdydz\nonumber\\
=&-\int_\Omega\Delta\bigg[v\cdot\nabla_HT-\left(\int_{-h}^z\nabla_H\cdot vd\xi\right)\left(\partial_zT+\frac{1}{h}\right)\bigg]\Delta Tdxdydz\nonumber\\
=&-\int_\Omega\bigg[\Delta v\cdot\nabla_HT+2\nabla v:\nabla_H\nabla T-\left(\int_{-h}^z\Delta\nabla_H\cdot vd\xi\right)\left(\partial_zT+\frac{1}{h}\right)\nonumber\\
&-2\left(\int_{-h}^z\nabla\nabla_H\cdot vd\xi\right)\cdot\nabla\partial_zT\bigg]\Delta Tdxdydz\nonumber\\
=&-\int_\Omega\bigg[\Delta v\cdot\nabla_HT+2\nabla v:\nabla_H\nabla T-\frac{1}{h}\left(\int_{-h}^z\Delta\nabla_H\cdot vd\xi\right)\nonumber\\
&-2\left(\int_{-h}^z\nabla\nabla_H\cdot vd\xi\right)\nabla\partial_zT\bigg]\Delta Tdxdydz\nonumber\\
&+\int_\Omega\left(\int_{-h}^z\Delta\nabla_H\cdot vd\xi\right)\partial_zT(\Delta_HT+\partial_z^2T)dxdydz\nonumber\\
=&-\int_\Omega\bigg[\Delta v\cdot\nabla_HT+2\nabla v:\nabla_H\nabla T-\frac{1}{h}\left(\int_{-h}^z\Delta\nabla_H\cdot vd\xi\right)\nonumber\\
&-2\left(\int_{-h}^z\nabla\nabla_H\cdot vd\xi\right)\cdot\nabla\partial_zT\bigg]\Delta Tdxdydz-\int_\Omega\bigg[\Delta\nabla_H\cdot vT\Delta_HT\nonumber\\
&+\left(\int_{-h}^z\Delta\nabla_H\cdot vd\xi\right)T\Delta_H\partial_zT+\frac{1}{2}\Delta\nabla_H\cdot v|\partial_zT|^2\bigg]dxdydz\nonumber\\
=&-\int_\Omega\bigg[\Delta v\cdot\nabla_HT+2\nabla v:\nabla_H\nabla T-\frac{1}{h}\left(\int_{-h}^z\Delta\nabla_H\cdot vd\xi\right)\nonumber\\
&-2\left(\int_{-h}^z\nabla\nabla_H\cdot vd\xi\right)\nabla\partial_zT\bigg]\Delta Tdxdydz-\int_\Omega\bigg[\Delta\nabla_H\cdot vT\Delta_HT\nonumber\\
&+\left(\int_{-h}^z\Delta\nabla_H\cdot vd\xi\right)T\Delta_H\partial_zT-\Delta v\cdot\nabla_H\partial_zT\partial_zT\bigg]dxdydz\nonumber\\
\leq&C\int_\Omega\bigg[\left(\int_{-h}^h|\Delta u|d\xi+|\Delta_H\bar v|\right)|\nabla_HT|+\left(\int_{-h}^h|\nabla u|d\xi+|\nabla_H\bar v|\right)|\nabla_H\nabla T|\nonumber\\
&+\left(\int_{-h}^h|\nabla_H\Delta v|d\xi\right)+\left(\int_{-h}^h|\nabla\nabla_H\cdot u|d\xi\right)|\nabla\partial_zT|\bigg]|\Delta T|dxdydz\nonumber\\
&+C\int_\Omega\bigg[|\Delta\nabla_Hv||T||\Delta_HT|+\left(\int_{-h}^h|\Delta\nabla_Hv|d\xi\right)
|T||\Delta_H\partial_zT|\bigg]dxdydz\nonumber\\
&+C\int_\Omega\left(\int_{-h}^h|\Delta u|d\xi+|\Delta_H\bar v|\right)|\partial_zT||\nabla_H\partial_zT|dxdydz\nonumber\\
\leq&C\int_M\bigg[\int_{-h}^h(|\nabla u|+|\nabla^2u|)d\xi\bigg]\bigg[\int_{-h}^h(|\nabla T|+|\nabla^2T|)|\nabla^2 T|d\xi\bigg]dxdy\nonumber\\
&+C\int_M(|\nabla_H\bar v|+|\Delta_H\bar v|)\left(\int_{-h}^h(|\nabla T|+|\nabla^2T|)|\nabla^2 T|d\xi\right)dxdy\nonumber\\
&+C\int_M\left(\int_{-h}^h|\nabla_H\Delta v|d\xi\right)|\Delta T|dxdydz+C\int_\Omega\bigg[|\Delta\nabla_Hv||T||\Delta_HT|\nonumber\\
&+\left(\int_{-h}^h|\Delta\nabla_Hv|d\xi\right)|T||\Delta_H\partial_zT|\bigg]dxdydz\nonumber\\
\leq&C[(\|\nabla u\|_2+\|\nabla^2u\|_2+\|\nabla_H\bar v\|_{L^2(M)}+\|\Delta_H\bar v\|_{L^2(M)})(\|\nabla T\|_2^{1/2}+\|\nabla^2T\|_2^{1/2})
\nonumber\\
&\times(\|\nabla T\|_2^{1/2}+\|\nabla^2T\|_2^{1/2}+\|\nabla_H\nabla T\|_2^{1/2}+\|\nabla_H\nabla^2T\|_2^{1/2})\nonumber\\
&\times\|\nabla^2 T\|_2^{1/2}(\|\nabla^2 T\|_2^{1/2}+\|\nabla_H\nabla^2 T\|_2^{1/2})+\|\nabla_H\Delta v\|_2\|\Delta T\|_2\nonumber\\
&+\|\nabla_H\Delta v\|_2\|T\|_\infty\|\Delta_HT\|_2+\|\nabla_H\Delta v\|_2\|T\|_\infty\|\Delta_H\partial_zT\|_2]\nonumber\\
\leq&C[(\|\nabla^2u\|_2+\|\Delta_H\bar v\|_{L^2(M)})\|\Delta T\|_2(\|\Delta T\|_2+\|\nabla_H\Delta T\|_2)+\|\nabla_H\Delta v\|_2\|\Delta T\|_2\nonumber\\
&+\|\Delta\nabla_H v\|_2\|T\|_\infty\|\Delta_HT\|_2+\|\nabla_H\Delta v\|_2\|T\|_\infty\|\nabla_H\Delta T\|_2]\nonumber\\
\leq&C(1+\|\nabla^2u\|_2^2+\|\Delta_H\bar v\|_2^2)\|\Delta T\|_2^2+\frac{1}{2}\|\nabla_H\Delta T\|_2^2+C(1+\|T\|_\infty^2)\|\nabla_H\Delta v\|_2^2,\nonumber
\end{align}
and thus
\begin{align}
&\frac{d}{dt}\|\Delta T\|_2^2+\|\nabla_H\Delta T\|_2^2\nonumber\\
\leq &C(1+\|T\|_\infty^2)\|\nabla_H\Delta v\|_2^2+C(1+\|\nabla^2u\|_2^2+\|\Delta_H\bar v\|_2^2)\|\Delta T\|_2^2.\label{3.6}
\end{align}

For any given $t\in[0,\mathcal T^*)$, recalling that $K_1(t)$ is a bounded continuously increasing function, it follows from Proposition \ref{lem3.1} that
$$
\sup_{0\leq s\leq t}\|T\|_\infty^2\leq\sup_{0\leq s\leq t}K_1(s)\leq K_1(t).
$$
Therefore it follows from (\ref{3.6}) that
\begin{align}
&\frac{d}{ds}\|\Delta T(s)\|_2^2+\|\nabla_H\Delta T(s)\|_2^2\nonumber\\
\leq&C(1+\|\nabla^2u(s)\|_2^2+\|\Delta_H\bar v(s)\|_{L^2(M)}^2)\|\Delta T(s)\|_2^2\nonumber\\
&+C(1+K_1(t))\|\nabla_H\Delta v(s)\|_2^2, \label{3.7}
\end{align}
for all $s\in(0,t)$, with $t\in(0,\mathcal T^*)$.
On the other hand, by (\ref{3.5}), it holds that
\begin{align}
&\frac{d}{ds}\|\Delta_Hv(s)\|_2^2+\|\nabla\Delta_Hv(s)\|_2^2
\nonumber\\
\leq&C(1+\|u(s)\|_6^4+\|v(s)\|_6^4+\|\nabla^2u(s)\|_2^2)(\|\Delta_Hv(s)\|_2^2+\|\Delta_HT(s)\|_2^2)
\nonumber\\
&+C(\|u(s)\|_6^4+\|v(s)\|_6^4+\|\nabla^2u(s)\|_2^2)\|\nabla_Hu(s)\|_2^2. \label{3.8}
\end{align}
Choose a sufficiently big positive constant $\alpha$. Multiplying (\ref{3.8}) by $\alpha(1+K_1(t))$ and summing the resulting inequality up with (\ref{3.7}), then we obtain
\begin{align*}
&\frac{d}{ds}[\alpha(1+K_1(t))\|\Delta_Hv(s)\|_2^2+\|\Delta T(s)\|_2^2]+(\|\nabla\Delta_Hv(s)\|_2^2+\|\nabla_H\Delta T(s)\|_2^2)\nonumber\\
\leq&C(1+K_1(t))(1+\|u\|_6^4+\|v\|_6^4+\|\Delta_H\bar v\|_{L^2(M)}^2+\|\nabla^2u\|_2^2)(s)\nonumber\\
&\times(\|\Delta_Hv\|_2^2+\|\Delta T\|_2^2)(s)+C(1+K_1(t))(\|u \|_6^4+\|v \|_6^4+\|\nabla^2u\|_2^2)(s)\|\nabla_Hu(s)\|_2^2
\end{align*}
for any $0\leq s\leq t<\mathcal T^*$. By Proposition \ref{3.1} and Proposition \ref{lem3.2}, it follows from this inequality that
\begin{align*}
&\sup_{0\leq s\leq t}(\|\Delta_Hv\|_2^2+\|\Delta T\|_2^2)+\int_0^t(\|\nabla\Delta_Hv\|_2^2+\|\nabla_H\Delta T\|_2^2)\\
\leq&Ce^{C(1+K_1(t))\int_0^t(1+\|u\|_6^4+\|v\|_6^4+\|\Delta_H\bar v\|_{L^2(M)}^2+\|\nabla^2u\|_2^2)ds}\\
&\times\bigg[\|v_0\|_{H^2}^2+\|T_0\|_{H^2}^2+(1+K_1(t))\int_0^t(\|u \|_6^4+\|v \|_6^4+\|\nabla^2u\|_2^2)\|\nabla_Hu\|_2^2ds\bigg]\\
\leq&Ce^{C(1+K_1(t))(t+K_1^2(t)t+K_1(t)+K_2(t))}[\|v_0\|_{H^2}^2+\|T_0\|_{H^2}^2\\
&+(1+K_1(t))
(K_1^2(t)t+K_2(t))K_2(t)]=:K_3(t),
\end{align*}
for every $t\in[0,\mathcal T^*)$, completing the proof.
\end{proof}

With these a priori estimates in hand, we are now ready to prove the global existence of strong solutions as follows.

\begin{proof}[\textbf{Proof of Theorem \ref{thm1}}]
By Proposition \ref{prop2.1}, there is a unique strong solution $(v,T)$ in $\Omega\times(0,t_0)$. We consider the solution on the maximal interval of existence $(0,\mathcal T^*)$. We need to prove that $\mathcal T^*=\infty$. Recall that we have assumed by contradiction that $\mathcal T^*<\infty$. By Proposition \ref{lem3.1}, Proposition \ref{lem3.2} and Proposition \ref{lem3.3}, we have the following estimate
\begin{align*}
\sup_{0\leq s\leq t}(\|v(s)\|_{H^2}^2+\|T(s)\|_{H^2}^2)+\int_0^t(\|\nabla v\|_{H^2}^2+\|\nabla_HT\|_{H^2}^2)ds\leq CK(t),
\end{align*}
for any $t\in(0,\mathcal T^*)$, where
$$
K(t)=K_1(t)+K_2(t)+K_3(t).
$$
Note that $K(t)$ is a bounded continuously increasing function, on $(0,\mathcal T^*),$ the above inequality implies that
\begin{align*}
\sup_{0\leq t<\mathcal T^*}(\|v(t)\|_{H^2}^2+\|T(t)\|_{H^2}^2)+\int_0^{\mathcal T^*}(\|\nabla v\|_{H^2}^2+\|\nabla_HT\|_{H^2}^2)dt\leq CK(\mathcal T^*),
\end{align*}
and thus, by Proposition \ref{prop2.1}, we can extend such strong solution beyond $\mathcal T^*$, contradicting to the definition of $\mathcal T^*$. This contradiction implies that $\mathcal T^*=\infty$, and thus completes the proof of Theorem \ref{thm1}.
\end{proof}

\section*{Acknowledgments}
{The work of C.C.  work is  supported in part by NSF grant DMS-1109022. The work of E.S.T. is supported in part by the Minerva Stiftung/Foundation, and  by the NSF
grants DMS-1009950, DMS-1109640 and DMS-1109645.}
\par

\end{document}